\newcommand{\Acal}{{\mathcal{A}}}
\newcommand{\Ecal}{{\mathcal{E}}}
\newcommand{\FF}{\mathbb{F}}
\newcommand{\NN}{\mathbb{N}}
\newcommand{\PP}{\mathbb{P}}
\newcommand{\QQ}{\mathbb{Q}}
\newcommand{\RR}{\mathbb{R}}
\newcommand{\ZZ}{\mathbb{Z}}
\newcommand{\id}{\textbf{\textit{I}}}
\newcommand{\Cont}{\operatorname{Cont}}
\newcommand{\Vol}{\operatorname{Vol}}
\newcommand{\RFH}{\operatorname{RFH}}
\newcommand{\RFC}{\operatorname{RFC}}
\newcommand{\HM}{\operatorname{HM}}
\renewcommand{\id}{\operatorname{id}}
\newtheorem{theorem}{Theorem}
\newtheorem*{theorem*}{Theorem}
\newtheorem{proposition}{Proposition}[section]
\newtheorem{lemma}[proposition]{Lemma}
\newtheorem{corollary}[theorem]{Corollary}
\theoremstyle{definition}
\newtheorem{example}[proposition]{Example}
\theoremstyle{remark}
\newtheorem{remark}[proposition]{Remark}
\numberwithin{equation}{section}
\begin{document}

\title{Lower complexity bounds for positive contactomorphisms}

\author{Lucas Dahinden}
\address{Universit\'e de Neuch\^atel (UNINE)}
\curraddr{Institut de Math\'ematiques, Rue Emile-Argand 2, 2000 Neuch\^atel}
\email{l.dahinden@gmail.com}
\thanks{The first author was supported by SNF grant 200021-163419/1.}

\subjclass[2010]{Primary 53D35; Secondary 37B40, 53D40, 57R17}

\date{June 15, 2015.}

\begin{abstract}
	Let $S^*Q$ be the spherization of a closed connected manifold of dimension at least two. Consider a contactomorphism $\varphi$ that can be reached by a contact isotopy that is everywhere positively transverse to the contact structure. In other words, $\varphi$ is the time-1-map of a time-dependent Reeb flow. We show that the volume growth of $\varphi$ is bounded from below by the topological complexity of the loop space of $Q$. Denote by $\Omega Q_0(q)$ the component of the based loop space that contains the constant loop.
\begin{theorem*}
	If the fundamental group of $Q$ or the homology of $\Omega Q_0(q)$ grows exponentially, then the volume growth of $\varphi$ is exponential, and thus its topological entropy is positive.
\end{theorem*}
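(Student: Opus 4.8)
The plan is to bound the volume growth of $\varphi$ below by the exponential growth rate of a suitable Floer homology of the geodesic Reeb flow on $S^*Q$, to recognise -- via a classical theorem of Gromov -- that this growth rate is positive as soon as $\pi_1(Q)$ or $H_*(\Omega Q_0(q))$ grows exponentially, and finally to pass from exponential volume growth to positive topological entropy by Yomdin's theorem. Fix on $S^*Q$ a contact form $\alpha$ whose Reeb flow $\phi^t$ is the cogeodesic flow of an auxiliary Riemannian metric on $Q$, and fix a cotangent fibre $\Lambda=S^*_qQ$, a closed Legendrian sphere. Because $\varphi$ lies in the positive part of the contactomorphism group, a positive contact Hamiltonian generating a contact isotopy from $\id$ to $\varphi$ is -- by compactness of $S^*Q$ -- bounded away from $0$ and from $\infty$, and standard monotonicity of the Eliashberg--Polterovich partial order $\preceq$ then yields constants $0<\delta\le T$ with
\[
	\phi^{\delta n}\ \preceq\ \varphi^n\ \preceq\ \phi^{Tn}\qquad(n\ge 1),
\]
$\varphi^n$ being again positive, as a composition of positive contactomorphisms. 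This sandwiching is the \emph{only} place positivity is used; it is precisely the feature an arbitrary contactomorphism lacks, whose iterates may cancel Reeb chords where a positive one cannot retreat.

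The analytic core, and the step I expect to be the main obstacle, is to turn this sandwiching into a lower bound on the Riemannian volume $\Vol\big(\varphi^n(\Lambda)\big)$. I would work with a Floer homology that detects Reeb chords from an auxiliary fibre $\Lambda'=S^*_{q'}Q$ to $\varphi^n(\Lambda)$ -- Rabinowitz Floer homology $\RFH$ in the symplectization, or equivalently the wrapped Floer homology of cotangent fibres -- carrying the action filtration by chord length. Positivity enters through monotonicity: the displayed inequalities produce, for each $n$, continuation maps that factor the Reeb-flow continuation from parameter $\delta n$ to parameter $Tn$ through $\RFH$ of the pair $(\Lambda',\varphi^n(\Lambda))$, so that the rank of the latter is at least the rank of an action-truncation of the former; in effect, $\varphi^n$ is forced to carry at least as many action-bounded Reeb chords as the time-$\delta n$ Reeb flow. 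Combined with a coarea-type inequality -- phrased through spectral invariants precisely so as to dispense with transversality of the intersection at a fixed $q'$ -- this gives $\Vol\big(\varphi^n(\Lambda)\big)\ \ge\ c_1\cdot\dim\RFH^{\le\delta n}_{\mathrm{geo}}$, where $\RFH_{\mathrm{geo}}$ denotes the Rabinowitz Floer homology of the geodesic Reeb flow. Building the Floer theory for the non-autonomous isotopy interpolating $\varphi^n$ and establishing this monotonicity rigorously -- in particular pinning down the action windows and checking that all grading shifts are $o(n)$ -- is where the real work sits.

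For the geodesic Reeb flow the right-hand side is computable. By the filtered isomorphisms of Abbondandolo--Schwarz, $\dim\RFH^{\le\ell}_{\mathrm{geo}}$ equals the total Betti number, over a fixed field $\mathbb{F}$, of the sublevel set $\{E\le\tfrac12\ell^2\}$ of the energy functional on the path space $\Omega_{q',q}Q$ (up to a degree shift and the contribution of constant chords). By the classical Morse theory of this functional -- Gromov's lemma on the growth of geodesic arcs -- that Betti number is bounded below, with constants depending only on the metric, in two ways: by the number of homotopy classes of paths $q'\to q$ admitting a representative of length $\le\ell$, and by $\sum_{k\le a\ell-b}\dim H_k(\Omega Q_0(q);\mathbb{F})$. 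Chaining the last two paragraphs,
\[
	\limsup_{n\to\infty}\tfrac1n\log\Vol\big(\varphi^n(\Lambda)\big)\ \ge\ c\cdot\limsup_{N\to\infty}\tfrac1N\log\sum_{k\le N}\dim H_k\big(\Omega Q_0(q);\mathbb{F}\big)
\]
for some $c>0$; moreover, taking $q'=q$ and lifting to the universal cover, geodesic loops at $q$ of length $\le\ell$ correspond to deck transformations $\gamma$ with $d_{\widetilde Q}(\widetilde q,\gamma\widetilde q)\le\ell$, a count comparable by the \v{S}varc--Milnor lemma to the number of elements of $\pi_1(Q)$ of word length $\lesssim\ell$, so the same $\limsup$ is also $\ge c'\cdot(\text{exponential growth rate of }\pi_1(Q))$.

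Finally the dichotomy. If $H_*(\Omega Q_0(q);\mathbb{F})$ grows exponentially for some field, the first inequality gives exponential volume growth of $\varphi$. If instead only $\pi_1(Q)$ is assumed to grow exponentially -- for instance $Q$ a closed hyperbolic surface, where $\Omega Q_0(q)\simeq\Omega\widetilde Q\simeq\ast$ and loop-space homology does \emph{not} grow -- the second estimate does the same. In either case $\varphi$ has exponential volume growth, and since $\varphi$ is $C^\infty$, Yomdin's theorem $h_{\mathrm{top}}(\varphi)\ge\limsup_n\tfrac1n\log\Vol\big(\varphi^n(\Lambda)\big)$ yields positive topological entropy. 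Everything outside the second paragraph is classical Riemannian geometry or the cited results of Gromov and Yomdin; the single genuinely hard point is the Floer-theoretic input of that paragraph -- monotonicity of Rabinowitz Floer homology along positive contact isotopies, and the passage from filtered Floer ranks to a genuine Riemannian volume bound.
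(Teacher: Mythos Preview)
Your proposal is correct and follows essentially the same route as the paper. The paper's implementation of your ``analytic core'' paragraph uses the Albers--Frauenfelder version of Rabinowitz--Floer homology for the time-dependent path $\varphi^t$ (extended $1$-periodically so that $\varphi^n$ is the time-$n$ map) and proves exactly the monotonicity you anticipate, by showing that the action is non-increasing along solutions of the continuation equation whenever the contact Hamiltonians satisfy $h_0^t\le h_1^t$; your Eliashberg--Polterovich sandwich $\phi^{\delta n}\preceq\varphi^n\preceq\phi^{Tn}$ appears there as the pointwise sandwich $c\le h^t\le C$ of contact Hamiltonians. For the passage from Floer ranks to Riemannian volume the paper does not use spectral invariants but a direct coarea argument: extend $\varphi^t$ homogeneously to the punctured disc bundle, project $\varphi^T(\dot D^*_qQ)$ to $Q$, and count transverse intersections with $\dot D^*_{q'}Q$ for generic $q'$, these intersections being exactly the generators of $\RFC_+^T$. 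The identification of geodesic $\RFH$ with path-space Morse homology is cited from Merry rather than Abbondandolo--Schwarz, but the content is the same.
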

A similar statement holds for polynomial growths. This result generalizes work of Dinaburg, Gromov, Paternain and Petean on geodesic flows and of Macarini, Frauenfelder, Labrousse and Schlenk on Reeb flows.\\
\indent Our main tool is a version of Rabinowitz--Floer homology developed by Albers and Frauenfelder.
\end{abstract}

\maketitle

\section{Introduction and result}\label{sec:intro}
Before stating the main theorem, we define positive contactomorphisms, the volume growth of maps and the growth of loop spaces. All our objects are in the smooth category and all manifolds have dimension $\geq2$.\\
\\
The spherization $S^*Q$ of a manifold $Q$ is the space of positive line elements in the cotangent bundle $T^*Q$. The tautological one-form $\lambda$ on $T^*Q$ does not restrict to $S^*Q$, but its kernel does. This endows $S^*Q$ with a co-oriented contact structure~$\xi$. Choose a contact form $\alpha$ for $\xi$. We call a smooth path of contactomorphisms $\varphi^t:[0,1]\to \Cont(S^*Q)$ starting at the identity \textit{positive} if its generating vector field $X^t(\varphi^t(x)):=\frac d{dt}\varphi^t(x)$ is positively transverse to the contact structure: $\alpha(X^t)>0$. A contactomorphism $\varphi$ is called \textit{positive} if there is a positive path $\varphi^t$ of contactomorphisms with $\varphi^0=id,\:\varphi^1=\varphi$. This notion is independent of the choice of contact form. \\

Let $\varphi:M\to M$ be a smooth diffeomorphism of a manifold $M$. Let $S\subset M$ be a compact submanifold and fix a Riemannian metric $g$ on $M$. We denote by $\gamma_{\rm{vol,pol}}(\varphi;S)$ and $\gamma_{\rm{\rm{vol,exp}}}(\varphi;S)$ the polynomial and exponential volume growth of~$S$ under iterations of $\varphi$, where the volume is induced by $g$:
\begin{eqnarray*}
	\gamma_{\rm{vol,pol}}(\varphi;S)&=&\liminf_{m\to\infty}\frac 1{\log m}\log\Vol(\varphi^m(S)),\\
	\gamma_{\rm{\rm{vol,exp}}}(\varphi;S)&=&\liminf_{m\to\infty}\frac 1{ m}\log\Vol(\varphi^m(S)).	
\end{eqnarray*}
The polynomial and exponential volume growth of a map is the supremum of polynomial and exponential volume growths over all compact submanifolds:
\begin{eqnarray*}
	\gamma_{\rm{vol,pol}}(\varphi)&=&\sup_S\gamma_{\rm{vol,pol}}(\varphi;S),\\
	\gamma_{\rm{vol,exp}}(\varphi)&=&\sup_S\gamma_{\rm{vol,exp}}(\varphi;S).
\end{eqnarray*}
These numbers are clearly independent of the choice of Riemannian metric.\\

Let $Q$ be a connected manifold and consider the connected component $\Omega Q_0(q)$ of contractible loops of the space of loops based at $q$. Denote by $\PP$ the set of primes and zero. For $p$ prime let $\FF_p$ be the field $\ZZ/p\ZZ$ and $\FF_0=\QQ$. Define the homological polynomial and exponential growth of $\Omega Q_0(q)$ by
\begin{eqnarray*}
	\gamma_{\rm{pol}}(\Omega Q_0(q))&:=&\sup_{p\in\PP}\liminf_{m\to\infty}\frac 1{\log m}\log\sum_{k=0}^m\dim(H_k(\Omega Q_0(q);\FF_p)),\\
	\gamma_{\rm{exp}}(\Omega Q_0(q))&:=&\sup_{p\in\PP}\liminf_{m\to\infty}\frac 1{ m}\log\sum_{k=0}^m\dim(H_k(\Omega Q_0(q);\FF_p)).	
\end{eqnarray*}
Note that $\Omega Q_0(q)$ is homotopy equivalent to any connected component of the space $\Omega Q(q,q')$ of paths in $Q$ from $q$ to $q'$. Thus if we had used in the above definition a connected component of $\Omega Q(q,q')$ instead of $\Omega Q_0(q)$, we would have got the same number.\\

Denote by $\gamma_{\rm{pol}}(\pi_1(Q))$ and $\gamma_{\rm{exp}}(\pi_1(Q))$ the polynomial and exponential growth of the fundamental group of $Q$ for some (and thus every) set of generators. With this notation we can state the main result of this paper.

\begin{theorem}\label{growththeorem}
Let $\varphi$ be a positive contactomorphism on the spherization $S^*Q$ of the closed manifold $Q$ and let $q$ be any point in $Q$.
\begin{enumerate}
	\item If $\gamma_{\rm exp}(\pi_1(Q))>0$ or if $\gamma_{\rm exp}(\Omega Q_0(q))>0$, then
	$$\gamma_{\rm{vol,exp}}(\varphi)\ \geq\ \gamma_{\rm{vol,exp}}(\varphi;S_q^*Q)\:>\:0.$$
	\item If $\gamma_{\rm pol}(\pi_1(Q))$ and $\gamma_{\rm pol}(\Omega Q_0(q))$ are finite, then $$\gamma_{\rm{vol,pol}}(\varphi)\ \geq\ \gamma_{\rm{vol,pol}}(\varphi;S_q^*Q)\ \geq\  \gamma_{\rm{pol}}(\pi_1(Q))+\gamma_{\rm{pol}}(\Omega Q_0(q))-1.$$
\end{enumerate}
\end{theorem}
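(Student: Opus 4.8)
The plan is to bound $\Vol(\varphi^m(S^*_qQ))$ from below by a count of trajectories of the positive isotopy connecting two cotangent fibres, and then to bound that count from below by the topology of the loop space of $Q$. Since $\varphi$ is only assumed positive rather than a Reeb flow, the decisive tool in the second step will be the Rabinowitz--Floer homology of Albers--Frauenfelder.

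First I would fix the setup: write $\varphi=\varphi^1$ for a positive path $\{\varphi^t\}$, extend it to all $t\in\RR$ by $\varphi^{t+1}=\varphi\circ\varphi^t$ (legitimate after making the generating vector field $X^t$ one-periodic), fix an auxiliary contact form $\alpha$ for $\xi$ and a Riemannian metric, and put $n=\dim Q$. Each fibre $S^*_qQ$ is a Legendrian $(n-1)$-sphere, so the ``wavefront'' $\Wcal_m:=\bigcup_{0\le t\le m}\varphi^t(S^*_qQ)$ is an immersed $n$-dimensional submanifold, transverse for generic $q'$ to the fibre $S^*_{q'}Q$ because $n+(n-1)=2n-1=\dim S^*Q$. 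A coarea / integral-geometry estimate --- exactly as in the work of Macarini--Schlenk and Frauenfelder--Labrousse--Schlenk for Reeb flows, using that $\sup_t\abs{X^t}$ is finite by one-periodicity --- then gives, with a metric-dependent constant $c>0$,
\[
\int_0^m\Vol\big(\varphi^t(S^*_qQ)\big)\,dt\ \ge\ c\int_Q\nu_m(q')\,d\mathrm{vol}(q'),
\]
where $\nu_m(q')$ denotes the number of pairs $(t,x)$ with $0\le t\le m$, $x\in S^*_qQ$ and $\varphi^t(x)\in S^*_{q'}Q$, i.e.\ the number of trajectories of $\{\varphi^t\}$ from $S^*_qQ$ to $S^*_{q'}Q$ of time-length at most $m$. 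A now-standard argument following the same references converts a growth estimate for the left-hand side into the claimed $\liminf$-type lower bounds for $\gamma_{\rm vol,exp}(\varphi;S^*_qQ)$ and $\gamma_{\rm vol,pol}(\varphi;S^*_qQ)$, the $1/m$ lost in passing from $\varphi^m(S^*_qQ)$ to the $n$-dimensional wavefront $\Wcal_m$ being harmless in the exponential case and accounting for the ``$-1$'' in the polynomial one.

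The core of the argument is a lower bound for $\nu_m(q')$. The trajectories counted by $\nu_m(q')$ are precisely the generators of the Rabinowitz--Floer complex of Albers--Frauenfelder associated to the pair of Legendrian fibres $(S^*_qQ,S^*_{q'}Q)$ and the positive contact Hamiltonian generating $\{\varphi^t\}$, truncated by the action filtration ``time-length $\le m$''; hence $\nu_m(q')\ge\dim\RFH^{\le m}$. Positivity enters through the comparison $a\cdot(\text{time-length})\le\text{action}$ with $a=\min_{t,x}\alpha(X^t)>0$, so the action window grows linearly in $m$. An Abbondandolo--Schwarz / Salamon--Weber type isomorphism, in the Rabinowitz--Floer form developed by Albers--Frauenfelder, identifies the relevant part of this filtered homology with the homology of the path space $\Omega Q(q,q')$ filtered by a length functional; since $\Omega Q(q,q')$ is a disjoint union, indexed by $\pi_1(Q)$, of copies of $\Omega Q_0(q)$, one obtains for suitable constants $\kappa>0$ and every prime or zero $p$
\[
\nu_m(q')\ \ge\ \#\{\,c\in\pi_1(Q):\abs{c}\le\kappa m\,\}\cdot\sum_{k\le\kappa m}\dim H_k\big(\Omega Q_0(q);\FF_p\big),
\]
the product reflecting the decomposition into components together with the fact that a cycle of loops representing a degree-$k$ class lying in the component $c$ sits at length $\lesssim\abs{c}+k$. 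It then remains to feed in the classical estimates: by Milnor's lemma word length on $\pi_1(Q)$ is bi-Lipschitz to the length of shortest loop representatives, so the first factor has the same exponential, resp.\ polynomial, growth rate as $\pi_1(Q)$; and by the loop-space estimates of Gromov and Paternain--Petean a degree-$k$ homology class of $\Omega Q_0(q)$ is carried by loops of length $O(k)$, so after taking $\sup_{p\in\PP}$ the second factor has the growth rate of $\gamma_{\rm exp}(\Omega Q_0(q))$, resp.\ $\gamma_{\rm pol}(\Omega Q_0(q))$. Chaining the three displayed estimates yields exactly the two assertions of the theorem, the polynomial exponents adding because $\nu_m(q')$ is bounded below by a product; finally $\gamma_{\rm vol,exp}(\varphi)\ge\gamma_{\rm vol,exp}(\varphi;S^*_qQ)$ and $\gamma_{\rm vol,pol}(\varphi)\ge\gamma_{\rm vol,pol}(\varphi;S^*_qQ)$ hold by definition.

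The step I expect to be the main obstacle is the identification of $\RFH^{\le m}$ with filtered path-space homology for an \emph{arbitrary} positive isotopy. For a genuine Reeb flow the relevant chord count is governed by Morse theory on the path space, which is how all the cited predecessors proceed; for a general positive $\varphi$ there is no such variational description on $S^*Q$ itself, and this is exactly where one needs the Albers--Frauenfelder Rabinowitz--Floer homology, whose invariance and action-filtration properties under positive contact isotopies both supply the lower bound and, crucially, remove any need for transversality or genericity of $\varphi$. Two secondary points also require care: the coarea estimates in the presence of the dimension defect $\dim S^*_qQ=n-1<n$, handled by passing to the $n$-dimensional wavefront; and the passage from growth of the possibly irregular function $m\mapsto\nu_m(q')$ to the $\liminf$-defined exponents $\gamma_{\rm vol}$ --- both treated as in the Reeb-flow literature.
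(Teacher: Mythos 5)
Your overall architecture coincides with the paper's: bound the volume from below by a fibrewise intersection count, identify the intersections with generators of the Albers--Frauenfelder Rabinowitz--Floer complex filtered by action, and transfer the growth of loop-space homology through that complex. (The paper replaces your wavefront $\Wcal_m$ by the homogeneous extension $\varphi^t(q,sp)=s\varphi^{st}(q,p)$ of the isotopy to the punctured disk $\dot D^*_qQ$, so that $\varphi^T(\dot D^*_qQ)\cap\dot D^*_{q'}Q$ directly encodes the chords of time $\le T$; this is a cosmetic difference and the ``$-1$'' in the polynomial case arises from the same extra dimension in both versions.)

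However, the step you yourself flag as ``the main obstacle'' --- identifying the filtered $\RFH$ of an \emph{arbitrary} positive isotopy with filtered path-space homology --- is left entirely unproved, and it is precisely the nontrivial content of the theorem; no such direct Abbondandolo--Schwarz type isomorphism is available when the contact Hamiltonian is time-dependent, since there is then no fixed hypersurface and no Morse-theoretic model on $\Omega Q(q,q')$. The paper circumvents this by a sandwiching argument: it chooses constants $c\le h^t\le C$, modifies them to time-dependent Hamiltonians $h^t_c\le h^t\le h^t_C$ generating reparametrized \emph{geodesic} flows, and shows that a monotone increasing homotopy of Hamiltonians yields a continuation map that is action-non-increasing and induces an isomorphism on the total homology $\RFH^\infty$. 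The quantity whose growth is thereby squeezed between two geodesic-flow quantities is not $\dim\RFC^{\le T}$ or $\dim\RFH^{\le T}$ but $\dim(\iota_+^{T,\infty})_*\RFH_+^T$, the image of the filtered homology in the limit --- a distinction your sketch glosses over but which is essential, because continuation maps are isomorphisms only after passing to $T=\infty$ while merely respecting the filtration at finite $T$. Only after this reduction does one invoke Merry's isomorphism $\RFH_+^T(\varphi^t_g)\cong\HM^T(\Ecal;\Omega Q(q,q'))$ for the geodesic flow and the Gromov/Paternain/Frauenfelder--Schlenk estimates. Without supplying this monotone-continuation and sandwiching argument (or an equivalent substitute), your proof does not go through for positive contactomorphisms that are not Reeb flows, which is the whole point of the generalization.
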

Yomdin and Newhouse~\cite{Y,N} related the exponential volume growth to the topological entropy. They showed that $\gamma_{\rm{vol,exp}}(\varphi)=h_{\rm{top}}(\varphi)$. This results in the following reformulation of Theorem~\ref{growththeorem}~(1).
\begin{corollary}
If $\pi_1(Q)$ or $\Omega Q_0(q,q)$ grows exponentially, then for every positive contactomorphism $\varphi$ on the spherization $S^*Q$,
$$h_{top}(\varphi)>0.$$
\end{corollary}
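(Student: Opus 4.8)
The plan is to reduce the growth statement for a positive contactomorphism $\varphi$ to a growth statement for a Reeb flow via Rabinowitz--Floer homology, and then to feed in the known computation of $\RFH$ for spherizations in terms of the loop space of $Q$. First I would observe that the relevant geometric quantity is the number of Reeb-type chords of $\varphi$ from the fibre $S_q^*Q$ to itself (equivalently, intersections $\varphi^m(S_q^*Q)\cap S_q^*Q$ counted with suitable action window), since a lower bound on the number of such intersection points, together with a lower bound on their ``length'', forces $\Vol(\varphi^m(S_q^*Q))$ to grow: a submanifold that must intersect a fixed fibre in at least $N(m)$ essentially distinct points, winding around, has volume at least of order $N(m)$. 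So the first step is a purely Riemannian/geometric lemma: $\Vol(\varphi^m(S_q^*Q))$ is bounded below by (a constant times) the number of ``distinct'' chords produced by Floer theory in an action window that itself grows with $m$.

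Next, because $\varphi$ is positive, it is the time-$1$ map of a time-dependent Reeb flow for some contact form; one then uses the fact (due to Albers--Frauenfelder, and the reason their version of Rabinowitz--Floer homology is the right tool) that the leaf-wise intersections / Reeb chords of such a $\varphi$ are detected by a Rabinowitz--Floer complex whose homology is invariant and, for the spherization $S^*Q$ with Lagrangian boundary condition given by the fibre $S_q^*Q$, is isomorphic to the homology of the based path space $\Omega Q(q,q')$, graded by an index. Concretely, I would set up the Rabinowitz--Floer homology $\RFH(S^*Q, S_q^*Q; \varphi)$ with a time-dependent Hamiltonian perturbation realizing $\varphi$, prove it is independent of the perturbation (so equal to the unperturbed $\RFH$ of the fibre), and identify it with $\bigoplus_{j} H_{*-?}(\Omega Q_0(q);\FF_p)$ together with the $\pi_1(Q)$-indexed direct sum coming from the non-contractible components. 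The total rank in action window $[0,T]$ then grows like $\sum_{k\le c T}\dim H_k(\Omega Q_0(q);\FF_p)$ times the number of elements of $\pi_1(Q)$ of word length $\le c'T$.

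Then I would run the iteration argument: replacing $\varphi$ by $\varphi^m$ multiplies the available action window by roughly $m$ (the chords of $\varphi$ concatenate to chords of $\varphi^m$, or more cleanly one works directly with $\varphi^m$ and notes that the minimal ``period'' is bounded), so the Floer complex for $\varphi^m$ relative to $S_q^*Q$ has rank at least $\sum_{k\le cm}\dim H_k(\Omega Q_0(q);\FF_p)$ plus the count of $\pi_1(Q)$-elements of length $\le c'm$. Taking $\liminf\frac1m\log(\cdot)$ or $\liminf\frac1{\log m}\log(\cdot)$ and combining with the geometric lemma yields $\gamma_{\mathrm{vol,exp}}(\varphi;S_q^*Q)\ge \max\{\gamma_{\mathrm{exp}}(\pi_1(Q)),\gamma_{\mathrm{exp}}(\Omega Q_0(q))\}$ and the polynomial estimate with the ``$+\,-1$'' coming from the standard relation between the growth of a product-type count and the growths of its factors (as in Paternain--Petean and Frauenfelder--Labrousse--Schlenk). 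The corollary is then immediate from the Yomdin--Newhouse equality $\gamma_{\mathrm{vol,exp}}(\varphi)=h_{\mathrm{top}}(\varphi)$ quoted above.

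The main obstacle I expect is twofold. The analytic heart is establishing that the Rabinowitz--Floer homology with the fibre as Lagrangian boundary condition is well-defined and invariant under the positive isotopy \emph{and} that a lower bound on its rank in a given action window forces the corresponding number of \emph{geometrically distinct, genuinely long} chords of $\varphi^m$ (one must rule out that many homology classes are represented by short or coinciding chords, and control the compactness/transversality of the moduli spaces in the non-compact spherization using the cone structure); this is where the Albers--Frauenfelder machinery has to be invoked carefully. The second, more elementary but still delicate, point is the geometric lemma converting a chord count into a volume lower bound uniformly in $m$ — one needs that the fibre $\varphi^m(S_q^*Q)$ cannot ``reuse'' the same volume for many different chords, which requires a quantitative transversality/monotonicity input along the Reeb direction.
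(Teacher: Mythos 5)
Your proposal is, in outline, the paper's proof of Theorem~\ref{growththeorem} (of which this corollary is the one-line consequence via Yomdin--Newhouse), but two of the steps you defer as ``obstacles'' are genuine gaps, and one of them is set up in a way that cannot work as stated. The volume lower bound: you propose to bound $\Vol(\varphi^m(S_q^*Q))$ from below by the number of chords from $S_q^*Q$ to the \emph{single} fibre over $q$. This fails twice over. Dimensionally, $\varphi^m(S_q^*Q)$ and a fibre $S_{q'}^*Q$ are both $(n-1)$-dimensional inside the $(2n-1)$-dimensional $S^*Q$, so generically they do not intersect at all; and even a large count of chords ending on one fixed fibre forces no volume, since the image can approach that fibre along many thin tentacles of arbitrarily small area --- the ``quantitative transversality'' you hope for does not exist. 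The paper's fix is structural rather than quantitative: extend $\varphi^t$ homogeneously to $T^*Q\setminus Q$, replace the sphere by the punctured disk $\dot D^*_qQ$ (Theorem~\ref{sublevel} controls the loss in growth rate), and bound $\Vol(\varphi^T(\dot D^*_qQ))$ below by $\int_{Q}\#\bigl(\varphi^T(\dot D^*_qQ)\cap \dot D^*_{q'}Q\bigr)\,dq'$, i.e.\ by integrating the chord count over \emph{all} generic endpoints $q'$; this is just the volume of the projection to $Q$ counted with multiplicity and needs no per-chord volume estimate.

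The second gap is in the invariance step. It is not enough to show that the Rabinowitz--Floer homology is ``independent of the perturbation'': the unfiltered homology carries no growth information. What is needed is that the growth in $T$ of $\dim(\iota_{+}^{T,\infty})_*\RFH_+^T$ is unchanged when the time-dependent Hamiltonian is deformed to a Reeb flow, and generic continuation maps do not respect the action filtration. This is the technical heart of the paper (Section~\ref{sec:appendix}): one sandwiches $h^t$ between modified constants $h_c^t\le h^t\le h_C^t$ and uses that \emph{monotone} deformations yield action-nonincreasing continuation maps, so the filtered groups for $\varphi^t$ are pinched between those of two Reeb flows whose filtrations differ only by a bounded time reparametrization. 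Relatedly, your iteration/concatenation of chords of $\varphi$ into chords of $\varphi^m$ is not needed and would be awkward to make precise; in the paper the Lagrange-multiplier variable $\eta$ in the functional~(\ref{functional}) already parametrizes chords of arbitrary arrival time, so the action window $[0,T]$ of the $1$-periodically extended flow directly counts chords arriving by time $T$.
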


The first versions of this theorem were proved by Dinaburg, Gromov, Paternain and Petean for geodesic flows, using Morse theory~\cite{Dina, Gro78, Pat97, PatPet03, PatPet06}, see also~\cite{FraSch14}. Frauenfelder--Schlenk~\cite{FS:GAFA} generalized the theorem to certain Hamiltonian flows on $T^*M$, using Lagrangian Floer homology. A further generalization to Reeb flows was found by Macarini--Schlenk \cite{MacSch11} (exponential) and Frauenfelder--Labrousse--Schlenk \cite{FLS} (polynomial), also using Lagrangian Floer homology. In this paper we extend these results to positive contactomorphisms. These maps can be realized as time-dependent Reeb flows. 

\begin{remark}
There are several approaches to dealing with the time-dependence of the Reeb flow. 
One is to absorb the time-dependence in an additional space factor, such as $T^*S^1$.
Another approach is to cook up an action functional for our problem,
and to deform it to the action functional for a Reeb flow.
While we did not succeed with the geometric approach, the second approach worked out well.
\end{remark}

\begin{remark} Positive entropy for all Reeb flows on many contact 3-manifolds different from spherizations has recently been established by Alves in~\cite{A1, A2, A3}.
\end{remark}

The identity map is a non-negative contactomorphism that can be uniformly approximated by positive contactomorphisms by slowing down a fixed positive contact isotopy. The class of positive contactomorphisms thus seems to be the largest natural class of contact geometric maps for which one has positive topological entropy on $S^*Q$ under the topological condition on $Q$ given in Theorem~\ref{growththeorem}. Indeed, without the positivity assumption various scenarii are possible:
\begin{example}\label{prop:examples}
	\begin{enumerate}
		\item There are closed manifolds $Q$ with $\pi_1(Q)$ of exponential growth whose spherization $S^*Q$ carries a non-negative contactomorphism $\varphi$ such that $h_{\rm{top}}(\varphi)=0$ and such that $\varphi$ is generated by an autonomous contact isotopy that is positive outside a submanifold of positive codimension.
		\item There are closed manifolds $Q$ with $\pi_1(Q)$ of exponential growth whose spherization $S^*Q$ carries a non-negative contactomorphism $\varphi$ such that $h_{\rm{top}}(\varphi)>0$ and such that $\varphi$ is generated by an autonomous contact isotopy that restricts to the identity on a subset of $S^*Q$ with nonempty interior.
	\end{enumerate}
\end{example}
Two specific examples are given in Section~\ref{examples}.

\subsection*{Acknowledgements}
	I wish to thank Peter Albers, Marcelo Alves, Urs Frauenfelder, Felix Schlenk and the anonymous referee for their valuable suggestions. I am particularly grateful to Urs for suggesting to use this version of Rabinowitz--Floer homology. This work is supported by SNF grant 200021-163419/1.

\section{Recollections}\label{sec:tce}

In this section we first represent the spherization $S^*Q$ as a hypersurface in $T^*Q$ and specify the choice of positive path of contactomorphism. Then we cite a theorem that relates the volume growth of a flow on a spherization to the volume growth of an extension of the flow to the sublevel. Finally we state some facts about the version of Rabinowitz--Floer homology that will be used in the proof of Theorem~\ref{growththeorem}. The precise definition of this Rabinowitz--Floer homology is given in Section~\ref{sec:appendix}.\\
 
	The spherization of a manifold can be naturally represented as a fiberwise starshaped hypersurface $\Sigma\subset T^*Q$ in the cotangent bundle with contact structure $\xi=\ker\lambda|_\Sigma$, where $\lambda$ is the Liouville one-form. The map that sends a positive line element to its intersection with~$\Sigma$ is a contactormorphism. The radial dilation of a fiberwise starshaped hypersurface by a positive function is a contactomorphism to its image. Every contact form of $(S^*Q,\xi)$ is realized as $\lambda|_\Sigma$ for some hypersurface. 
	The symplectization $(\Sigma\times \RR_{>0},d(r\alpha))$ naturally embeds into $T^*Q\backslash Q$. A contact isotopy $\varphi^t$ admits a lift to a Hamiltonian isotopy of $\Sigma\times\RR_{>0}$, generated by a time-dependent 1-homogenous Hamiltonian $H^t$. \\
	
 Fix $(\Sigma\subset T^*Q,\lambda|_\Sigma)$ representing $S^*Q$. For every positive contactomorphism $\varphi$ the path $\{\varphi^t\}_{t\in[0,1]}$ can be chosen to be the Reeb flow for $t$ near $0$ and $1$, as explained in the second part of the proof of~\cite[Proposition~6.2]{FLS}. This means that the contact Hamiltonian $h^t$ generating $\varphi^t$ is constant $\equiv c_0$ for $t$ near $0$ and $1$. Thus $h^t$ permits smooth periodic or constant extensions. In this paper we always extend $\varphi^t$ such that $h^t$ is constant $c_0$ for $t\leq0$ and periodic for $t\geq0$, see Figure~\ref{fig:graph1}. The reason for this will become clear in the proof of Theorem~\ref{continuation}.

\begin{figure}[h]
	\centering
		\includegraphics{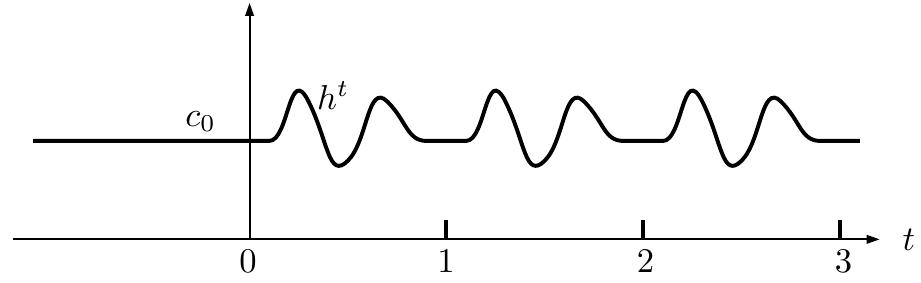}
	\caption{The function $h^t$, extended to $\RR$}.
	\label{fig:graph1}
\end{figure}

Fix a Riemannian metric on $Q$ and consider the induced metrics on $TQ$ and $T^*Q$. Denote by $\mu_k$ the induced volume form on $k$-dimensional submanifolds of $T^*Q$ and denote by $\Vol(\cdot)$ the integral of $\mu_k$ on $\cdot$. Using this metric, represent $S^*Q$ as the 1-cosphere-bundle in $T^*Q$. Let $\varphi$ be a positive contactomorphism on $S^*Q$ and choose a path of positive contactomorphisms $\varphi^t$ with $\varphi^1=\varphi$. For each $t$ we extend $\varphi^t$ to $T^*Q\backslash Q$ by 
\begin{eqnarray}\label{extension}\varphi^t(q,s p)&=&s\varphi^{s t}(q,p)\end{eqnarray} 
for $s>0$. Note that the $q$-coordinate of $\varphi^t(q,s p)$ is the $q$-coordinate of $\varphi^{s t}(q,p)$. Also note that the extension~(\ref{extension}) is not the Hamiltonian lift mentioned above. The following theorem relates the volume growth of a sphere $S^*_qQ$ with the volume growth of its punctured sublevel disk $\dot D^*_qQ$ under a general twisted periodic flow. The proof can be found in the proof of Proposition~4.3 in~\cite{FLS}, where the statement is proven for the slow growth of Reeb flows. 
\begin{theorem}\label{sublevel}
		Let $\varphi^t:S^*Q\to S^*Q$ be a smooth family of diffeomorphisms with $\varphi^0=\id$ whose generating vector field is 1-periodic. Extend $\varphi^t$ to $T^*Q\backslash Q$ by~(\ref{extension}). Then
\begin{eqnarray*}
	\gamma_{\rm{vol,exp}}(\varphi^1;\dot D^*_qQ)&\leq& \gamma_{\rm{vol,exp}}(\varphi^1;S^*_qQ),\\
	\gamma_{\rm{vol,pol}}(\varphi^1;\dot D^*_qQ)-1&\leq& \gamma_{\rm{vol,pol}}(\varphi^1;S^*_qQ).
\end{eqnarray*}
\end{theorem}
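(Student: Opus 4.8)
The plan is to foliate the punctured disk fiber by rescaled cosphere fibers and push the foliation forward. Write a point of $\dot D^*_qQ$ as $sp$ with $p$ in the unit cosphere fiber $S^*_qQ$ and $s\in(0,1]$. By the extension formula~(\ref{extension}) the $m$-th iterate sends $sp$ to $s\,\varphi^{ms}(p)$ --- exactly for an autonomous Reeb flow; for a $1$-periodic flow one has the analogous description of the $m$-fold composite, and this is the point at which the time-dependence has to be dealt with, which I would do as in~\cite{FLS}. After the substitution $u=ms$, this means that $\varphi^m(\dot D^*_qQ)$ is the image of the embedding
\[
	\Xi_m\colon S^*_qQ\times(0,m]\longrightarrow T^*Q,\qquad (p,u)\longmapsto \frac um\,\varphi^u(p),
\]
which is injective because $\Xi_m(p,u)$ has norm $u/m$ (recall $\|\varphi^u(p)\|=1$ on the unit cosphere bundle).

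Next I would estimate the Jacobian of $\Xi_m$ and integrate. Decompose the tangent space of the domain as $T_pS^*_qQ\oplus\RR\,\partial_u$. On $T_pS^*_qQ$ the differential of $\Xi_m$ is $d\varphi^u$ followed by the fiberwise rescaling by $u/m\le1$; since the latter does not increase $(n-1)$-volume (here $n=\dim Q$), this block distorts $(n-1)$-volume by at most the Jacobian of $\varphi^u|_{S^*_qQ}$. On $\partial_u$ the differential equals $\frac1m\varphi^u(p)+\frac um X^u(\varphi^u(p))$, which has norm at most $1+C_0$: indeed $\|\varphi^u(p)\|=1$, $u\le m$, and $\|X^u\|\le C_0$ uniformly, since $X^u$ is $1$-periodic and $S^*Q$ is compact. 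As the Jacobian of $\Xi_m$ is at most the product of these two quantities, the coarea formula gives, with a constant independent of $m$,
\[
	\Vol\bigl(\varphi^m(\dot D^*_qQ)\bigr)\ \le\ (1+C_0)\int_0^m \Vol\bigl(\varphi^u(S^*_qQ)\bigr)\,du.
\]

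Finally I would deduce the two inequalities. A single integration $\int_0^m(\cdot)\,du$ raises the degree of a polynomial by exactly one, which produces the $-1$ of the polynomial statement, while for an exponentially growing integrand $\int_0^m e^{hu}\,du$ has the same exponential rate $h$, so nothing is lost in the exponential statement. Making this precise at the level of the defining $\liminf$'s is the step I expect to be the main obstacle: one must control $\int_0^m \Vol(\varphi^u(S^*_qQ))\,du$ in terms of the value $\Vol(\varphi^m(S^*_qQ))$ at the upper endpoint, even though $t\mapsto\Vol(\varphi^t(S^*_qQ))$ need not be monotone --- already the geodesic flow on $S^2$, whose wavefronts collapse at antipodal points, shows this. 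The usable input is that $1$-periodicity gives $\varphi^t=\varphi^{t-\lfloor t\rfloor}\circ\varphi^{\lfloor t\rfloor}$ and that the family $\{\varphi^\sigma\}_{\sigma\in[0,1]}$, together with its inverses, has uniformly bounded derivatives on the compact manifold $S^*Q$, so that the integrand cannot oscillate too much over a single period; combining this with the elementary estimates for the integrals above yields both inequalities. This is precisely the argument carried out in the proof of Proposition~4.3 of~\cite{FLS} for Reeb flows, the passage to $1$-periodic flows costing only the uniform control over $[0,1]$ already used above.
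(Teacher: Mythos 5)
Your argument is essentially the paper's: the paper proves Theorem~\ref{sublevel} simply by citing the proof of Proposition~4.3 in~\cite{FLS} (plus a remark that the direct extension~(\ref{extension}) works as well), and your foliation of $\dot D^*_qQ$ by rescaled cosphere fibers, the substitution $u=ms$, the Jacobian/coarea estimate, and the final comparison of the integral with the endpoint value are precisely the content of that cited proof. The two points you explicitly defer to~\cite{FLS} --- the fact that for a genuinely time-dependent flow the $m$-fold composite of the extended time-$1$ map is not given by $(q,sp)\mapsto s\varphi^{ms}(q,p)$, and the passage from $\int_0^m \Vol(\varphi^u(S^*_qQ))\,du$ to $\Vol(\varphi^m(S^*_qQ))$ despite the $\liminf$ in the definitions and the non-monotonicity of the integrand --- are exactly the points the paper itself leaves to the citation, so your proposal is no less complete than the paper's own proof.
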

\begin{remark}
	The proof in~\cite{FLS} extends the flow on a hypersurface to its sublevel by extending the contact Hamiltonian to a homogeneous symplectic Hamiltonian. If one extends the flow directly as in~(\ref{extension}) without using Hamiltonians and on a 1-cosphere bundle for an arbitrary Riemannian metric, the proof still goes through. 
\end{remark}

Albers and Frauenfelder~\cite{AF1} built a version of Rabinowitz--Floer homology for the space $\Omega^1_{q,q'}$ of $W^{1,2}$ paths from $T^*_{q}Q$ to $T^*_{q'}Q$. Given a positive path of contactomorphisms~$\varphi^t$ we construct a certain modification~$H^t$ of a 1-homogenous Hamiltonian in $T^*Q$ corresponding to $\varphi^t$, for details see Section~\ref{sec:appendix}. Define the functional $\Acal(\varphi^t;q,q'):\Omega^1_{q,q'}\times\RR\to\RR$ by
\begin{eqnarray}
\label{functional}\Acal(\varphi^t;q,q')(x,\eta)&=&\int_0^1x^*\lambda-\eta\int_0^1H^{\eta t}(x(t))\;dt.
\end{eqnarray}
A pair $(x,\eta)\in\Omega_{q,q'}^1\times\RR$ is a critical point of the functional~(\ref{functional}) if and only if it satisfies the equations
$$\begin{cases}
		\dot x(t)&=\:\: \eta X_{H^{\eta t}}(x(t)),\\
		H^{\eta}(x(1))&=\:\: 0.
\end{cases}$$
The first equation implies that $x$ is an orbit of $X_{H^t}$, but with time scaled by $\eta$. The second equation implies that the orbit ends on $(H^{\eta})^{-1}(0)$. For $H^t=H$ autonomous $H^{-1}(0)$ is a hypersurface for which $\eta$ plays the role of a Lagrange multiplier. For time-dependent $H^t$, however, there is no such surface and $H^{\eta t}(x(t))$ might be very large for $t<1$. The chain complex $\RFC^T=\RFC^T(\varphi^t;q,q')$ of the filtered Rabinowitz--Floer homology is generated by the critical points of $\Acal(\varphi^t;q,q')$ with action value $\leq T\in\RR\cup\{\infty\}$, for more details see Section~\ref{sec:appendix}. The boundary operator $\partial^T$ is defined by counting solutions of a negative gradient flow with respect to a suitable $L^2$-metric. For $T\leq T'$ denote by $\iota^{T,T'}:\RFC^T\to\RFC^{T'}$ the inclusion. We denote by $\RFC_+^T=\RFC^T/\iota^{0,T}(\RFC^0)$ the positive part of $\RFC^T$ and set $\RFH_+^T=H(\RFC_+^T,\partial_+^T)$, where $\partial_+^T:\RFC_+^T\to\RFC_+^T$ is the induced boundary operator. For $T\leq T'$ let $\iota_{+}^{T,T'}:\RFC_+^T\to\RFC_{+}^{T'}$ be the homomorphism induced by inclusions.
 \\

The next four theorems describe the properties of $\RFH_+^T$ used in this paper. 

\begin{theorem}\label{morse}
	The functional $\Acal(\varphi^t;q,q')$ is Morse for generic points $q'\in Q$. In this case the Rabinowitz--Floer homology $\RFH_+^T(\varphi^t;q,q')$ is well-defined for all $T$. 
\end{theorem}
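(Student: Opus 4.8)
The plan is to run a standard Sard--Smale transversality argument in which the right endpoint $q'$ is the abstract perturbation parameter; this is precisely the freedom exploited by Albers--Frauenfelder in~\cite{AF1}, since the cotangent fibers $T^*_{q'}Q$ foliate $T^*Q\setminus Q$ as $q'$ ranges over $Q$. Keeping the positive path $\varphi^t$ (hence the Hamiltonian $H^t$ of Section~\ref{sec:appendix}) and the basepoint $q$ fixed, I would form the universal moduli space
\[
\widetilde{\Mcal}=\bigl\{(x,\eta,q')\ :\ q'\in Q,\ (x,\eta)\in\Omega^1_{q,q'}\times\RR\ \text{a critical point of}\ \Acal(\varphi^t;q,q')\bigr\},
\]
which is the zero set of a smooth section $\Fcal$ of a Banach bundle over the total space $\bigcup_{q'}\Omega^1_{q,q'}\times\RR\to Q$; concretely $\Fcal$ records the residuals of $\dot x=\eta X_{H^{\eta t}}(x)$ and of $H^\eta(x(1))=0$, the boundary conditions $x(0)\in T^*_qQ$ and $x(1)\in T^*_{q'}Q$ being built into the total space. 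The first task is to show $\Fcal$ is transverse to the zero section, so that $\widetilde{\Mcal}$ is a smooth Banach manifold.

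For this I would linearize $\Fcal$ at a zero $(x,\eta,q')$, allowing variations $(\widehat x,\widehat\eta,v)$ with $v\in T_{q'}Q$ the variation of the endpoint. In the $(\widehat x,\widehat\eta)$ directions alone the linearization is the Hessian of $\Acal(\varphi^t;q,q')$, a Fredholm operator of index $0$ (as the Hessian of a functional), so it suffices to kill its cokernel by varying $q'$. An element of the cokernel solves the adjoint linearized equation along $x$; pairing it against the boundary variations produced by general $v\in T_{q'}Q$ forces its value at $t=1$ to vanish, whereupon unique continuation for the time-dependent, $\eta$-rescaled linearized Hamiltonian flow forces it to vanish identically---a contradiction. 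Thus the full linearization is onto and $\widetilde{\Mcal}$ is a manifold. (For $q'\neq q$ every critical point automatically has $\eta\neq0$: if $\eta=0$ then $x$ is constant, so $x(0)=x(1)\in T^*_qQ\cap T^*_{q'}Q=\emptyset$; hence the multiplier direction never degenerates the setup and the $\RFC^0$-part causes no trouble.)

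The projection $\pi:\widetilde{\Mcal}\to Q$, $(x,\eta,q')\mapsto q'$, is then Fredholm of index $0$, since for fixed $q'$ its linearization is the Hessian above. By the Sard--Smale theorem its set of regular values is residual in $Q$, and for such a $q'$ every $(x,\eta)\in\pi^{-1}(q')$ has invertible Hessian, i.e.\ $\Acal(\varphi^t;q,q')$ is Morse. Given this, the well-definedness of $\RFH_+^T(\varphi^t;q,q')$ follows from the construction in Section~\ref{sec:appendix}: the Morse property makes $\RFC^T$ finitely generated with a genuine action filtration, the compactness and a~priori estimates for the negative gradient flow are supplied there, and transversality of the trajectory spaces is obtained by a further generic choice (of $q'$, or of the auxiliary metric/almost complex structure), so that $\partial^T$ and $\partial_+^T$ are defined, square to zero, and are compatible with the inclusions $\iota^{T,T'}$, $\iota_+^{T,T'}$.

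The main obstacle I anticipate is the cokernel computation: one must verify that moving the right endpoint \emph{alone} already spans the cokernel of the Rabinowitz Hessian, which rests on a careful unique-continuation argument for the time-dependent, $\eta$-rescaled linearized Hamiltonian ODE together with correct bookkeeping of the Lagrange-multiplier direction and the constraint $H^\eta(x(1))=0$. A secondary point is checking that $\Fcal$ and $\Acal(\varphi^t;q,q')$ are genuinely smooth on the $W^{1,2}$ completions and that $\pi$ is Fredholm of index exactly $0$, which is what lets a regular value translate into the Morse property.
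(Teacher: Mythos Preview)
Your outline is correct and is exactly the argument the paper defers to: the paper's own proof is a one-line citation to \cite[Sections~6 and~7]{AF1}, and what you have written is the standard Sard--Smale transversality scheme carried out there, with the right endpoint $q'$ as the perturbation parameter. So there is nothing to compare---you have supplied the details the paper omits.

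One remark on the ``main obstacle'' you flag: the cokernel step is easier than you suggest. An element $(\xi,\mu)$ in the cokernel of the Rabinowitz Hessian has $\xi$ solving a first-order linear \emph{ODE} in $t$ (the adjoint of the linearized Hamiltonian flow), not a PDE; once the boundary pairing against $v\in T_{q'}Q$ forces $\xi(1)=0$, ordinary ODE uniqueness gives $\xi\equiv0$, and then the remaining scalar condition kills $\mu$. So ``unique continuation'' here is just the Picard--Lindel\"of theorem, and the Lagrange-multiplier bookkeeping is routine. Your observation that $q'\neq q$ forces $\eta\neq0$ at every critical point is the right way to keep the constraint $H^\eta(x(1))=0$ nondegenerate.
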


This theorem follows from standard theory, see~\cite[Sections 6 and 7]{AF1}. Denote by $Q_{\rm gen}$ the set of $q'$ for which $\Acal(\varphi^t;q,q')$ is Morse and that are different from $q$. Then $Q_{\rm gen}$ has full measure in $Q$. The following theorem is the key ingredient of our proof.

\begin{theorem}\label{continuation}
	Let $\varphi_i$, $i=0,1$, be two positive contactomorphisms of $S^*Q$, let $\varphi_i^t$ be corresponding positive paths of contactomorphisms and let $Q^i_{\rm gen}$ be the corresponding sets from Theorem~\ref{morse}. Then for every $q'\in Q^0_{\rm gen}\cap Q^1_{\rm gen}$ the (polynomial and exponential) growth of $\dim(\iota_{+}^{T,\infty})_*\RFH_+^T(\varphi^t_0;q,q')$ is the same as the (polynomial and exponential) growth of $\dim(\iota_{+}^{T,\infty})_*\RFH_+^T(\varphi^t_1;q,q')$.
\end{theorem}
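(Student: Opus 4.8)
The plan is to interpolate between the two positive paths $\varphi_0^t$ and $\varphi_1^t$ and show that the filtered Rabinowitz--Floer homologies are related by homomorphisms that shift the action filtration by a bounded amount, so that the asymptotic (polynomial or exponential) growth rate of $\dim(\iota_+^{T,\infty})_*\RFH_+^T$ is unchanged. First I would choose a smooth family $\{\varphi_s^t\}_{s\in[0,1]}$ of positive paths of contactomorphisms connecting $\varphi_0^t$ to $\varphi_1^t$; since the space of positive paths with the prescribed normalization (Reeb flow near $t=0,1$, so that $h^t$ admits the periodic extension of Figure~\ref{fig:graph1}) is convex-like, such an interpolation exists. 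This yields a family of modified Hamiltonians $H_s^t$ and hence a family of action functionals $\Acal_s:=\Acal(\varphi_s^t;q,q')$ on $\Omega^1_{q,q'}\times\RR$. For $q'\in Q^0_{\rm gen}\cap Q^1_{\rm gen}$ the endpoints are Morse; along the path some $\Acal_s$ may be degenerate, but one handles this in the standard Floer-theoretic way by perturbing within the space of admissible data, or by passing to a generic path for which the moduli spaces of $s$-dependent gradient trajectories are cut out transversally.

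Next I would set up the continuation chain map $\Phi_{01}:\RFC^T(\varphi_0^t;q,q')\to\RFC^{T+\kappa}(\varphi_1^t;q,q')$ by counting solutions of the $s$-dependent negative gradient flow equation interpolating between $\nabla\Acal_0$ and $\nabla\Acal_1$. The crucial point is the \emph{a priori} energy estimate: because the family $\{\Acal_s\}$ depends on $s$ through the compactly-supported-in-time modification $H_s^t$ and the contact Hamiltonians $h_s^t$ vary in a compact family, there is a constant $\kappa=\kappa(\varphi_0^t,\varphi_1^t,q,q')\geq 0$ bounding $\big|\tfrac{d}{ds}\Acal_s(x,\eta)\big|$ uniformly along gradient trajectories; this is exactly the kind of estimate proved in~\cite{AF1} and is why the normalization of $h^t$ near the endpoints matters (it keeps the variation controlled). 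Consequently $\Phi_{01}$ raises the action filtration by at most $\kappa$, and likewise the reverse continuation map $\Phi_{10}$ raises it by at most $\kappa$. Standard continuation-map arguments give that $\Phi_{10}\circ\Phi_{01}$ is chain-homotopic to the inclusion $\iota^{T,T+2\kappa}$, and symmetrically for the composition in the other order. Passing to the positive part (quotienting by $\iota^{0,\cdot}(\RFC^0)$) the maps $\Phi_{01}^+$, $\Phi_{10}^+$ descend, because they are compatible with the action filtration and hence with the sub-complexes in action $\leq 0$; so we obtain the analogous sandwich on $\RFH_+$.

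Finally I would extract the growth statement. From the commuting (up to homotopy) diagram of filtered maps one gets, for every $T$,
\[
\dim(\iota_+^{T,\infty})_*\RFH_+^T(\varphi_0^t;q,q')\ \leq\ \dim(\iota_+^{T+\kappa,\infty})_*\RFH_+^{T+\kappa}(\varphi_1^t;q,q')
\]
and the reverse inequality with the roles of $0$ and $1$ swapped; here one uses that the image $(\iota_+^{T,\infty})_*$ is detected by elements surviving to $\RFH_+^\infty$, which the continuation maps respect. Sandwiching the counting function for $\varphi_0^t$ between two shifts (by $\pm\kappa$) of the counting function for $\varphi_1^t$, and recalling that both the polynomial growth $\liminf_{T\to\infty}\tfrac{1}{\log T}\log(\cdot)$ and the exponential growth $\liminf_{T\to\infty}\tfrac1T\log(\cdot)$ are insensitive to a bounded shift of the argument, the two growth rates coincide, which is the assertion.

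The main obstacle I expect is the \emph{a priori} energy bound controlling the action-shift $\kappa$ for the $s$-dependent gradient flow: unlike the autonomous Reeb case, the time-dependent Hamiltonians $H_s^t$ can make $H^{\eta t}(x(t))$ large for $t<1$, so one must show that the relevant quantity entering the energy identity — essentially $\int_0^1\partial_s\!\big(\eta\,H_s^{\eta t}(x(t))\big)\,dt$ along trajectories — stays uniformly bounded, using the compactness of the family of contact Hamiltonians and the chosen periodic-in-$t$ normalization; the remaining transversality and compactness for the continuation moduli spaces is routine given~\cite{AF1}.
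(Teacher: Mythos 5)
Your overall strategy (continuation maps between interpolating action functionals, plus the observation that growth rates are insensitive to controlled filtration shifts) is the right framework, but the step you yourself single out as the main obstacle is where the argument genuinely breaks: there is no uniform additive constant $\kappa$ bounding the action shift along continuation trajectories. The $s$-derivative of the action is
\[
\frac{\partial}{\partial s}\Acal_s(x,\eta)\;=\;-\frac{\eta}{\kappa}\int_0^1 \partial_s H_s^{\eta t}(x(t))\,dt ,
\]
which carries an explicit factor of $\eta$; and by Lemma~\ref{independence} the value of $\eta$ at a critical point equals its action. So a continuation trajectory emanating from a generator of action $\approx T$ has $\eta(s)$ of order $T$, and the total action shift is of order $T\cdot\sup_s\|\partial_s h_s^t\|$ --- a multiplicative, not additive, shift. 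Compactness of the family of contact Hamiltonians cannot save this: the unboundedness comes from the Lagrange multiplier, not from the Hamiltonians. Consequently your maps $\Phi_{01}:\RFC^T\to\RFC^{T+\kappa}$ with $T$-independent $\kappa$ do not exist, and the ``bounded shift'' conclusion at the end is not available. (An additive bound would in fact be stronger than what the paper itself obtains: its proof only controls the filtration up to the linear rescaling $T\mapsto\tau(T)\leq 2\frac{C}{c}T$.)

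The paper repairs exactly this point by restricting to \emph{monotone} deformations $h_0^t\leq h_1^t$, for which the sign of the shift is controlled without any quantitative bound: for $\eta\geq0$ the integrand above is $\leq 0$, and for $\eta\leq0$ it vanishes because the chosen extension of $h^t$ is the constant $c_0$ for $t\leq0$, so that $H_0^{\eta t}=H_1^{\eta t}$ there. Hence the action is non-increasing along continuation trajectories, $\Phi$ is filtration-preserving and descends to the positive quotient, giving the one-sided inequality $\dim\iota_*\RFH_+^T(\varphi_0^t)\leq\dim\iota_*\RFH_+^T(\varphi_1^t)$. The theorem then follows by sandwiching $h^t$ between two constants $c\leq h^t\leq C$ (suitably modified near $t=0,1$), whose flows are time reparametrizations of a single Reeb flow, so that $\iota_*\RFH_+^{T}(\varphi_{h_C^t}^t)\cong\iota_*\RFH_+^{\tau(T)}(\varphi_{h_c^t}^t)$ with $T\leq\tau(T)\leq 2\frac{C}{c}T$. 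If you want to keep your direct-interpolation scheme, you would have to replace the additive $\kappa$ by this kind of multiplicative control and still determine the direction of the shift for non-monotone interpolations; the monotone sandwich is the cleaner route.
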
	

Thus the growth of the homology is preserved by a deformation of the flow $\varphi^t$. Theorem~\ref{continuation} is stated in~\cite[Section 7]{AF1}. We give a proof in Section~\ref{sec:appendix}.
	
\begin{theorem}\label{merry}
	Assume that $\varphi^t_g$ is a geodesic flow. Then for $q'\in Q_{\rm gen}$ the Rabinowitz--Floer homology is isomorphic to the Morse homology of the energy functional $\mathcal E(x)=\int_0^1 g(\dot x,\dot x)\;dt$ on the space $\Omega Q(q,q')$ of paths in $Q$ from $q$ to $q'$:
	\begin{eqnarray*}
		\RFH^{T}_{+}(\varphi^{t}_g;T_q^*Q,T_{q'}^*Q)&\cong&\HM^T(\mathcal E; \Omega Q(q,q')).
	\end{eqnarray*}
	This isomorphism commutes with $(\iota_{+}^{T,\infty})_*$.
\end{theorem}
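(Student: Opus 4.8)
The plan is to reduce to the autonomous situation and then transport the Abbondandolo--Schwarz comparison of Floer and Morse theory to the Rabinowitz functional, in the form used by Merry for Rabinowitz--Floer homology of cotangent bundles. First I would unwind the definitions when $\varphi^t_g$ is the geodesic flow. Then the contact Hamiltonian is the constant $c_0$, so no reparametrisation in $t$ is needed and the modification $H^t$ of Section~\ref{sec:appendix} can be taken autonomous, equal to a Hamiltonian $H$ whose zero set is the unit cosphere bundle $\Sigma_g\subset T^*Q$ of $g$ and whose Hamiltonian flow on $\Sigma_g$ is the cogeodesic Reeb flow. The functional $\Acal(\varphi^t_g;q,q')(x,\eta)=\int_0^1 x^*\lambda-\eta\int_0^1 H(x(t))\,dt$ becomes the Rabinowitz action functional on paths from $T_q^*Q$ to $T_{q'}^*Q$. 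Along a critical point $H(x(t))\equiv H(x(1))=0$, so its action is $\int_0^1 x^*\lambda=\eta$ and $(x,\eta)$ is a Reeb chord of $\Sigma_g$ of length $\eta$ from the fibre over $q$ to that over $q'$. Via the Legendre transform, the chords with $\eta>0$ correspond bijectively to the geodesics of $(Q,g)$ from $q$ to $q'$, i.e.\ to the critical points of $\mathcal E$ on the path space $\Omega Q(q,q')$ (taken in its $W^{1,2}$ completion, on which $\mathcal E$ satisfies Palais--Smale); for $q'\in Q_{\rm gen}$ both sides are nondegenerate — the Morse condition for $\mathcal E$ is precisely the absence of conjugate points — and the Morse index theorem identifies the Conley--Zehnder-type grading of $(x,\eta)$ with the Morse index of the corresponding geodesic, up to a global shift fixed by a model computation such as $Q=S^n$. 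Since $\mathcal E(x)=\ell(x)^2$ on geodesics, the action value $\eta=\ell(x)$ matches the length filtration of $\HM^T(\mathcal E;\Omega Q(q,q'))$; passing from length to energy only reparametrises the filtration by a fixed homeomorphism of $(0,\infty)$, which does not affect the growth rates used in Theorem~\ref{growththeorem}.

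The heart of the proof is the identification of the two differentials, for which I would run the Abbondandolo--Schwarz scheme. Putting on $T^*Q$ the almost complex structure induced by $g$, the negative gradient lines of $\Acal(\varphi^t_g;q,q')$ solve a Floer boundary value problem on the strip with Lagrangian boundary on the exact Lagrangians $T_q^*Q$ and $T_{q'}^*Q$; exactness of the boundary conditions together with the Liouville geometry at infinity yields a $C^0$ bound by the maximum-principle/convexity argument already used in~\cite{AF1}, hence compactness of the relevant moduli spaces. One then builds a chain map $\RFC_+^T(\varphi^t_g;q,q')\to\CM^T(\mathcal E;\Omega Q(q,q'))$ by counting hybrid configurations consisting of a pseudoholomorphic half-strip asymptotic to a Reeb chord and a half negative-gradient trajectory of $\mathcal E$ ending at a geodesic, matched at the common end. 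The standard gluing and compactness package shows this is a chain map that does not increase the action and is the identity on generators of equal action; being upper triangular for the action filtration it is a filtered chain homotopy equivalence, so it induces $\RFH_+^T(\varphi^t_g;T_q^*Q,T_{q'}^*Q)\cong\HM^T(\mathcal E;\Omega Q(q,q'))$ for every $T$ compatibly with all inclusion-induced maps — in particular with $(\iota_+^{T,\infty})_*$ and its Morse analogue, which is the last assertion of the theorem. Passing to the positive part is harmless: quotienting by $\RFC^0$ removes exactly the generators with $\eta\le0$, namely the backward Reeb chords (the constant is absent, since $q\neq q'$), none of which corresponds to a geodesic, so $\RFC_+^T$ is already freely generated by the forward chords counted on the Morse side.

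I expect the only genuine difficulty to be the second step, namely the Floer-to-Morse comparison: it needs the full analytic package — transversality for the mixed, cotangent-fibre boundary value problem, the $C^0$ estimate ruling out escape to infinity, the Conley--Zehnder-versus-Morse index identity, and the hybrid gluing argument. All of this is, however, available in the literature for cotangent bundles (Abbondandolo--Schwarz) and for Rabinowitz--Floer homology of spherizations (Merry), and it adapts to cotangent-fibre boundary conditions and to the positive quotient essentially without change, so the write-up can stay short.
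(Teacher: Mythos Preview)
Your sketch is correct and is precisely the Abbondandolo--Schwarz/Merry scheme one would use to establish the isomorphism. Note, however, that the paper does not prove this theorem: it simply cites Merry~\cite[Theorem~3.16]{M} and adds the one-line observation that, since the geodesic flow is autonomous, the time-dependent functional~(\ref{functional}) collapses to the classical Rabinowitz action functional $\int x^*\lambda-\eta\int H$ treated in~\cite{M}. So while your outline is exactly the right way to understand where the result comes from (and your identification of the analytic difficulties is accurate), in the paper this is a black-box citation rather than something reproved; a write-up here would therefore just amount to the reduction you give in your first paragraph together with a reference.
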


This theorem is contained in Merry's work~\cite[Theorem~3.16]{M}. Note that for autonomous flows (in particular geodesic flows) the action functional is the more classical Rabinowitz--Floer action functional, which Merry used in his work:
$$\Acal(x,\eta)=\int_0^1x^*\lambda-\eta\int_0^1H^{\eta t}=\int_0^1x^*\lambda-\eta\int_0^1H.$$ 

We finally need a link between the sublevel growth of the homology of $\Ecal$ and the growth of the homology of the based loop space.
\begin{theorem}\label{theoremheap}
	Let $q'$ be non-conjugate to $q$.\\
	If $\gamma_{\rm exp}(\Omega Q_0(q))>0$ or $\gamma_{\rm exp}(\pi_1(Q))>0$, then 
	\begin{eqnarray*}
		\liminf_{T\to\infty}\frac 1T\log\dim (\iota_{+}^{T,\infty})_*\HM^T(\mathcal E; \Omega Q(q,q'))&>&0.
	\end{eqnarray*}
	If both $\gamma_{\rm pol}(\Omega Q_0(q))$ and $\gamma_{\rm pol}(\pi_1(Q))$ are finite, then 
	\begin{eqnarray*}
		\liminf_{T\to\infty}\frac 1{\log T}\log\dim (\iota_{+}^{T,\infty})_*\HM^T(\mathcal E; \Omega Q(q,q'))&\geq& \gamma_{\rm pol}(\pi_1(Q)) + \gamma_{\rm pol}(\Omega Q_0(q)).
	\end{eqnarray*}
\end{theorem}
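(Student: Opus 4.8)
The plan is to relate the filtered Morse homology of the energy functional $\mathcal E$ on $\Omega Q(q,q')$ to the singular homology of the (based) loop space, and then to track how the topological growth of $\Omega Q_0(q)$ and of $\pi_1(Q)$ forces growth of the \emph{image} groups $(\iota_{+}^{T,\infty})_*\HM^T$. First I would recall the classical fact, going back to Morse and Milnor, that when $q'$ is non-conjugate to $q$ the energy functional on the path space $\Omega Q(q,q')$ is Morse and that its Morse homology computes the singular homology of $\Omega Q(q,q')$; moreover, since $\Omega Q(q,q')$ is homotopy equivalent to the based loop space $\Omega Q(q)$ (the relevant components being $\Omega Q_0(q)$, as remarked in the introduction), one has $\HM^\infty(\mathcal E;\Omega Q(q,q'))\cong H_*(\Omega Q(q);\FF_p)$ for each coefficient field $\FF_p$. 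The filtration by energy sublevels $\{\mathcal E\le T\}$ is an exhaustion of $\Omega Q(q,q')$ by compact-type subspaces, and every homology class appears at some finite sublevel, so for each fixed degree $k$ the map $(\iota_+^{T,\infty})_*$ in degree $k$ is eventually surjective onto $H_k(\Omega Q_0(q);\FF_p)$ as $T\to\infty$. The positive-part quotient $\RFC_+$/$\HM_+$ only removes the contribution of the constant path (action zero), which changes the count by a bounded amount and does not affect growth rates.

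The core of the argument is then a counting estimate: I would show that for each $T$,
$$\dim (\iota_+^{T,\infty})_*\HM^T(\mathcal E;\Omega Q(q,q'))\ \ge\ \dim\,\mathrm{im}\Big(H_*(\{\mathcal E\le T\};\FF_p)\to H_*(\Omega Q_0(q);\FF_p)\Big),$$
so it suffices to bound the right-hand side from below by something growing at the desired rate. For the exponential case this is exactly the content of the Gromov--Paternain--Petean circle of ideas: if $\gamma_{\rm exp}(\Omega Q_0(q))>0$, then the ranks of $H_k(\Omega Q_0(q);\FF_p)$ summed over $k\le m$ grow exponentially in $m$; combined with the standard fact that a class in $H_k$ is represented in a sublevel set $\{\mathcal E\le c\,k^2\}$ (or $\{\mathcal E\le c\,k\}$, depending on normalization — in any case polynomially in $k$), one gets exponentially many linearly independent classes surviving into $H_*(\{\mathcal E\le T\})$ for $T$ large, hence the $\liminf\frac1T\log(\cdots)>0$. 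If instead $\gamma_{\rm exp}(\pi_1(Q))>0$, I would use that $\pi_0$ of the loop space is $\pi_1(Q)$ and that distinct free-homotopy/based-homotopy classes give linearly independent degree-zero classes; a word of length $\ell$ in the generators is represented by a path of energy $O(\ell^2)$, so exponentially many components are reached by sublevel $T$, again forcing exponential growth of the image. These two mechanisms (higher homology vs.\ $\pi_1$) are genuinely independent sources of complexity, which is why the hypothesis is a disjunction.

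For the polynomial statement I would run the same scheme quantitatively. If $\gamma_{\rm pol}(\pi_1(Q))=a$ and $\gamma_{\rm pol}(\Omega Q_0(q))=b$ are both finite, then the number of $\pi_1$-classes of word-length $\le\ell$ grows like $\ell^a$, and within each fixed component the summed Betti numbers of $\Omega Q_0(q)$ up to degree $m$ grow like $m^b$. Since a class in degree $k$ lying in the component of a word of length $\ell$ is represented at energy $\lesssim (k+\ell)^2$, i.e.\ at sublevel $T$ with $k,\ell\lesssim\sqrt T$, the total number of independent classes surviving into $\{\mathcal E\le T\}$ is $\gtrsim (\sqrt T)^a\cdot(\sqrt T)^b = T^{(a+b)/2}$ — and after accounting for the $1/\log T$ normalization versus the conventions in the definitions of $\gamma_{\rm pol}$ (which are set up precisely so these square-root factors are absorbed), this yields the bound $\ge \gamma_{\rm pol}(\pi_1(Q))+\gamma_{\rm pol}(\Omega Q_0(q))$. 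The main obstacle I anticipate is the bookkeeping in this last step: one must combine the two growth contributions on the \emph{same} energy scale (so that classes from many components and from high degrees are counted together, not separately), and one must check that passing to the image under $(\iota_+^{T,\infty})_*$ — rather than to $\HM^T$ itself — does not lose the growth, i.e.\ that the classes one produces at sublevel $T$ are not killed by the differential at any higher level. That survival is automatic here because the classes come from genuine singular homology of $\Omega Q_0(q)$ pulled back along the sublevel inclusion, so they lie in the image of $\iota_+^{T,\infty}$ by construction; making this precise, together with reconciling the quadratic energy-length estimate with the normalizations in the definitions, is the technical heart of the proof.
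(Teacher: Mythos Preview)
The paper does not give its own proof of this theorem: immediately after the statement it simply records that ``This is a result for geodesic flows taken from~\cite{Gro78},~\cite{P} and~\cite{Pat97} in the exponential case and from~\cite{FraSch06} in the polynomial case.'' There is thus no argument in the paper to compare your proposal against; you are in effect sketching what those references do.

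Your outline is broadly faithful to that literature. The identification of $\HM^\infty(\mathcal E)$ with $H_*(\Omega Q(q,q'))$ via Morse theory on the path space, the Gromov-type estimate that a degree-$k$ homology class is carried by a sublevel of controlled action, the observation that classes coming from genuine singular homology automatically survive under $(\iota_+^{T,\infty})_*$, and the separation of the two growth mechanisms ($\pi_1$ via connected components in degree zero, $\Omega Q_0$ via higher Betti numbers) are exactly the ingredients of \cite{Gro78,P,Pat97} and \cite{FraSch06}.

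One point deserves tightening. In this paper the filtration parameter $T$ on $\HM^T(\mathcal E)$ is matched, via Theorem~\ref{merry}, with the Rabinowitz--Floer action, which for the geodesic flow equals the \emph{length} $L$ of the geodesic chord, not the energy $\mathcal E=L^2$. Your polynomial computation takes $k,\ell\lesssim\sqrt T$ and obtains $T^{(a+b)/2}$; after applying $\frac{1}{\log T}\log(\cdot)$ this gives only $(a+b)/2$, not $a+b$. With the filtration read as length (so $k,\ell\lesssim T$), the square roots disappear and the bound becomes $a+b$ as stated. This is exactly the ``reconciling the quadratic energy--length estimate with the normalizations'' that you yourself flagged, but it is not mere bookkeeping: as your argument is written it would lose a factor of~$2$ in the polynomial exponent.
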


	This is a result for geodesic flows taken from~\cite{Gro78},~\cite{P} and~\cite{Pat97} in the exponential case and from~\cite{FraSch06} in the polynomial case. 

\section{Proof of Theorem~\ref{growththeorem}}\label{sec:proof}

	Fix a Riemannian metric $g$ on~$Q$, consider the induced Riemannian metric on~$T^*Q$ and represent $S^*Q$ as the 1-cosphere-bundle in $T^*Q$ with respect to this metric as in Section~\ref{sec:tce}. Given the positive contactomorphism $\varphi:S^*Q\to S^*Q$, choose a positive path of contactomorphisms $\varphi^t$ with $\varphi^0=id,\:\varphi^1=\varphi$, extended in time as in Section~\ref{sec:tce}. Fix $q\in Q$. The exponential (polynomial) volume growth of $\varphi$ is not less than the exponential (polynomial) volume growth of the cosphere $S^*_qQ$ under $\varphi$: $$\gamma_{\rm{vol,exp}}(\varphi)\geq\gamma_{\rm{vol, exp}}(\varphi;S^*_qQ),\quad \gamma_{\rm{vol,pol}}(\varphi)\geq\gamma_{\rm{vol, pol}}(\varphi;S^*_qQ).$$ 
	Extend $\varphi^t$ to $T^*Q\backslash Q$ by~(\ref{extension}). By Theorem~\ref{sublevel}, $$\gamma_{\rm{vol,exp}}(\varphi;S^*_qQ)\geq\gamma_{\rm{vol, exp}}(\varphi^1;\dot D^*_qQ),\quad \gamma_{\rm{vol,pol}}(\varphi;S^*_qQ)\geq\gamma_{\rm{vol, pol}}(\varphi^1;\dot D^*_qQ)-1.$$
	
	The projection of $\varphi^T(\dot D^*_qQ)$ to the base manifold $Q$ has an open and dense set of regular values $Q_{\rm gen}(T)$. The set $Q^0_{\rm gen}:=\bigcap_{T\in\NN} Q_{\rm gen}(T)\backslash\{q\}$ is comeager and thus has full measure. For $q'\in Q^0_{\rm gen}$ the disk $\varphi^{T}(\dot D^*_qQ)$ intersects $\dot D^*_{q'}Q$ transversally for all $T$ and consequentially the number $\#(\varphi^T(\dot D^*_qQ)\cap \dot D^*_{q'}Q)$ is well defined. The volume of $\varphi^T(\dot D^*_qQ)$ for $T>0$ is not less than the volume of its projection (counted with multiplicity) to $Q^0_{\rm gen}$, 
\begin{eqnarray*}
	\Vol\left(\varphi^T(\dot D^*_{q}Q)\right)&\geq&\int_{Q^0_{\rm gen}}\#\left(\varphi^T(\dot D^*_qQ)\cap \dot D^*_{q'}Q\right) dq'.
\end{eqnarray*}
By the homogeneity of the flow, the elements of $\varphi^T(\dot D^*_{q}Q)\cap \dot D^*_{q'}Q$ correspond to orbits of $\varphi^t$ from $S^*_{q}Q$ to $S^*_{q'}Q$ that arrive at the latest at time $T$. \\

To count these orbits, define the Rabinowitz--Floer action functional $\Acal$ as in~(\ref{functional}). It is Morse for $q'\in Q^1_{\rm gen}$, see Theorem~\ref{morse}. The intersection $Q^0_{\rm gen}\cap Q^1_{\rm gen}$ is also comeager. Since $q\neq q'$, the critical orbits $(x,\eta)$ of $\Acal$ with $0<\Acal(x,\eta)\leq T$ are in bijection with the orbits of $\varphi^t$ from $S^*_qQ$ to $S^*_{q'}Q$ that arrive at time $\leq T$. On the other hand, these critical orbits generate the Rabinowitz--Floer chain complex $\RFC_+^T(\varphi^t;T_q^*Q,T_{q'}^*Q)$. Thus
\begin{eqnarray*}
\Vol\left(\varphi^T(\dot D^*_{q}Q)\right)&\geq&\int_{Q^0_{\rm gen}\cap Q^1_{\rm gen}}\dim\RFC_+^T(\varphi^t;T_q^*Q,T_{q'}^*Q) \;dq'\\
&\geq&\int_{Q^0_{\rm gen}\cap Q^1_{\rm gen}}\dim(\iota_{+}^{T,\infty})_*\RFH_+^{T}(\varphi^t;T_q^*Q,T_{q'}^*Q) \;dq'.
\end{eqnarray*}
Denote by $Q^2_{\rm gen}$ the set of points $q'\in Q$ that are not conjugate to $q$ with respect to the geodesic flow $\varphi^t_g$ of $g$. By Theorem~\ref{continuation} for $q'\in Q^0_{\rm gen}\cap Q^1_{\rm gen}\cap Q^2_{\rm gen}$ the growth rate of $\dim(\iota_{+}^{T,\infty})_*\RFH_+^{T}(\varphi^t;T_q^*Q,T_{q'}^*Q)$ is the same as the growth rate of $\dim(\iota_{+}^{T,\infty})_*\RFH_+^{T}(\varphi^t_g;T_q^*Q,T_{q'}^*Q)$. The group $(\iota_{+}^{T,\infty})_*\RFH_+^T(\varphi_g^t;T_q^*Q,T_{q'}^*Q)$ is isomorphic to the group $(\iota^{T,\infty})_*\HM^T(\Ecal;\Omega Q(q,q'))$ by Theorem~\ref{merry}. If $\gamma_{\rm exp}(\Omega Q_0(q))>0$ or $\gamma_{\rm exp}(\pi_1(Q))>0$, then 
$$\liminf_{T\to\infty}\frac 1T\log\dim (\iota^{T,\infty})_*\HM^T(\mathcal E; \Omega Q(q,q'))>0$$
 by Theorem~\ref{theoremheap}. Altogether, $\gamma_{\rm{vol,exp}}(\varphi^t;S^*_qQ)>0$. If both $\gamma_{\rm pol}(\Omega Q_0(q))$ and $\gamma_{\rm pol}(\pi_1(Q))$ are finite, then 
$$\liminf_{T\to\infty}\frac 1{\log T}\log\dim(\iota^{T,\infty})_* \HM^T(\mathcal E; \Omega Q(q,q'))\geq\gamma_{\rm pol}(\pi_1(Q)) + \gamma_{\rm pol}(\Omega Q_0(q))$$
 by Theorem~\ref{theoremheap}. Altogether, $\gamma_{\rm{vol,pol}}(\varphi^t;S^*_qQ)\geq\gamma_{\rm{pol}}(\Omega Q_0(q))+\gamma_{\rm{pol}}(\pi_1)-1$.

\section{Proof of Theorem~\ref{continuation}}\label{sec:appendix}
 The Rabinowitz--Floer homology we used in Section~\ref{sec:proof} was constructed by Albers--Frauenfelder in~\cite{AF1}. For the proof of Theorem~\ref{continuation}, they do not give details.
In this paper we use a sandwiching argument which allows us to concentrate on monotone deformations, bypassing the problems that arise for more general deformations. We first introduce the action functional properly and then prove Theorem~\ref{continuation}.\\

To define the action functional~(\ref{functional}), we want to associate to a positive contactomorphism $\varphi:S^*Q\to S^*Q$ a Hamiltonian on $T^*Q$. First we choose a positive path $\{\varphi^t\}_{t\in[0,1]}$ with $\varphi^0=id$ and $\varphi^1=\varphi$. We represent the spherization as $(\Sigma\subset T^*Q,\lambda|_\Sigma)$ as in Section~\ref{sec:tce} and generate $\{\varphi^t\}_{t\in[0,1]}$ by the contact Hamiltonian $h^t:\Sigma\times [0,1]\to\RR$. Since the path is positive, $h^t>0$. As explained in Section~\ref{sec:tce} we can choose $h^t=c_0$ for some fixed constant $c_0$ in a neighbourhood of $0$ and $1$. We extend $h^t$ on $\Sigma\times\RR$ constantly for $t\leq 0$ and 1-periodically for $t\geq0$.\\ 

Embed the symplectization of $\Sigma$ in $T^*Q$ and extend the coordinate $r$ by $r=0$ on $T^*Q\backslash(\Sigma\times\RR_{>0})$. Fix $\kappa, R\geq 1$, 
$$m\ \leq\ \min\{h^t(\theta)\mid\theta\in S^*Q,\, t\in S^1\},\quad M\ \geq\ \max\{h^t(\theta)\mid\theta\in S^*Q,\, t\in S^1\}$$
 and choose smooth functions $\beta:\RR_{\geq0}\to[0,1]$ and $h_{m}^M:\RR_{\geq0}\to \RR_{>0}$ with 
\begin{eqnarray*}
	\beta(r)&=&\begin{cases}0&\quad\mbox{if}\quad r\leq1 \quad \mbox{or}\quad r\geq R\kappa+1,\\1&\quad\mbox{if}\quad 2\leq r\leq R\kappa,\end{cases}\\
	h_m^M(r) &=&\begin{cases}m&\quad\mbox{if}\quad r\leq 2,\\M&\quad\mbox{if}\quad R\kappa\leq r.\end{cases}
\end{eqnarray*}
Then we define the Hamiltonian
\begin{eqnarray*}\label{Hamiltonian}
	H^t_{\kappa,R}(\theta,r)&=&r\Big(\beta(r)\,h^t(\theta)+(1-\beta(r))\,h_m^M(r)\Big)-\kappa.
\end{eqnarray*}
Apart from the shift by $-\kappa$, the Hamiltonian $H^t_{\kappa,R}$ is the 1-homogenous extension of $h^t$ for $2\leq r\leq R\kappa$ and the Hamiltonian of a Reeb flow for $r\leq 1$ and $r\geq R\kappa+1$. The Rabinowitz--Floer functional $\Acal_{\kappa,R}:=\Acal_{\kappa,R}(\varphi^t;q,q'):\Omega^1_{q,q'}\times\RR\to\RR$ is given by
\begin{eqnarray*}\label{functional1}
\Acal_{\kappa,R}(\varphi^t;q,q')(x,\eta)&=&\frac1\kappa\left(\int_0^1x^*\lambda-\eta\int_0^1H^{\eta t}_{\kappa,R}(x(t))\,dt\right).
\end{eqnarray*}
A pair $(x,\eta)\in\Omega^1_{q,q'}\times\RR$ is a critical point of $\Acal_{\kappa,R}$ if and only if it satisfies the equations
\begin{eqnarray*}\label{crit}
\begin{cases}
		\dot x(t)&=\:\: \eta X_{H_{\kappa,R}^{\eta t}}(x(t)),\\
		H_{\kappa,R}^{\eta}(x(1))&=\:\: 0.
\end{cases}
\end{eqnarray*}
 The factor $\frac1\kappa$ does not change the critical points, just the critical values. The functional $\Acal_{\kappa,R}$ depends on the choice of $\kappa, R$, but the following lemma shows that for large enough $\kappa, R$ the critical points with action in a fixed range are independent of the choice. This justifies that we suppressed $\kappa$ and $R$ in the main text. 

\begin{lemma}\label{independence}
	Given $a<b$, there are constants $\kappa_0,R_0$ such that for $\kappa\geq\kappa_0$ and $R\geq R_0$ the following holds. If $(x,\eta)$ is a critical point with $a\leq\Acal_{\kappa,R}(x,\eta)\leq b$, then the radial component of $x$ stays in $[2,R\kappa]$ for $t\in[0,1]$ and $\Acal_{\kappa,R}(x,\eta)=\eta$. 
\end{lemma}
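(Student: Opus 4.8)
To prove Lemma~\ref{independence}, the plan is to derive the statement from an exact formula for the action of a critical point together with the boundedness of the conformal factor of the contact flow.

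I would first record an \emph{action identity}. Write $H^t_{\kappa,R}(\theta,r)=r\,\sigma^t(\theta,r)-\kappa$ with $\sigma^t(\theta,r)=\beta(r)h^t(\theta)+(1-\beta(r))h_m^M(r)$, and arrange — a harmless strengthening of the construction — that $\beta$ is monotone on each transition interval $[1,2]$ and $[R\kappa,R\kappa+1]$ and that $h_m^M$ is monotone. On the symplectization $\lambda=r\alpha$ and the Liouville field is $r\partial_r$, so $\lambda(X_H)=r\,\partial_r H$ for every Hamiltonian $H$; plugging in $H^t_{\kappa,R}$ and using the critical point equation $\dot x=\eta X_{H^{\eta t}_{\kappa,R}}(x)$ gives
\begin{equation*}
	\Acal_{\kappa,R}(x,\eta)\ =\ \eta\ +\ \frac{\eta}{\kappa}\int_0^1 r(t)^2\,\partial_r\sigma^{\eta t}(x(t))\,dt ,\qquad r(t):=r(x(t)) .
\end{equation*}
A short computation shows $\partial_r\sigma^t$ is supported in $\{\,1\le r\le 2\,\}\cup\{\,R\kappa\le r\le R\kappa+1\,\}$ and is nonnegative there (it equals $\beta'(r)(h^t(\theta)-m)\ge0$ on the first shell and $\beta'(r)(h^t(\theta)-M)\ge0$ on the second). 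Hence the correction term has the sign of $\eta$, so $\Acal_{\kappa,R}(x,\eta)$ and $\eta$ have the same sign and $|\eta|\le|\Acal_{\kappa,R}(x,\eta)|\le\max(|a|,|b|)=:B$; moreover $\eta\ne0$, since an $\eta=0$ orbit would be constant while $q\ne q'$.

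Next I would localize the endpoint and record the conformal estimate, then trap the radial coordinate. As $\sigma^t$ is a convex combination of values of $h^t$ and of $h_m^M$, it lies in $[m,M]$, so $H^\eta_{\kappa,R}(x(1))=0$ forces $r(1)=\kappa/\sigma^\eta(x(1))\in[\kappa/M,\kappa/m]$, which lies in $(2,R\kappa)$ once $\kappa,R$ are large. On $\{\,2\le r\le R\kappa\,\}$ one has $\beta\equiv1$, hence $H^t_{\kappa,R}=r\,h^t(\theta)-\kappa$ and its Hamiltonian vector field is the symplectization lift of the contact vector field of $h^t$; along this flow $r$ is rescaled by the conformal factor of the contact isotopy, so $|\tfrac{d}{dt}\log r(x(t))|\le|\eta|\,C$ there, with $C:=\sup_{t,\theta}|dh^t(R_\alpha)(\theta)|<\infty$ depending only on the smooth $1$-periodic contact Hamiltonian, not on $\kappa,R$. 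Now set $\Lambda:=e^{CB}$ and pick $\kappa_0,R_0$ so large that for $\kappa\ge\kappa_0$, $R\ge R_0$ one has $\kappa/(M\Lambda)>2$, $\kappa\Lambda/m<R\kappa$, and $r(1)\in(2,R\kappa)$. If $r(t)\notin[2,R\kappa]$ for some $t$, then since $r(1)\in(2,R\kappa)$ there is a largest $t^*<1$ with $r(t^*)\in\{\,2,R\kappa\,\}$, and by maximality $x([t^*,1])\subset\{\,2\le r\le R\kappa\,\}$; the conformal estimate and $|\eta|\le B$ give $|\log r(t^*)-\log r(1)|\le CB$, hence $r(t^*)\in[r(1)/\Lambda,\,r(1)\Lambda]\subseteq[\kappa/(M\Lambda),\,\kappa\Lambda/m]\subset(2,R\kappa)$, contradicting $r(t^*)\in\{\,2,R\kappa\,\}$. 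So $r(t)\in[2,R\kappa]$ for all $t$, and then $\partial_r\sigma\equiv0$ along $x$, so the integral in the action identity vanishes and $\Acal_{\kappa,R}(x,\eta)=\eta$.

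The main obstacle is the first step: one has to notice that the action of a critical point splits as $\eta$ plus a term that is concentrated in the two thin transition shells and carries a definite sign, which is exactly what yields the a priori bound $|\eta|\le B$ with no further work. After that, localizing $x(1)$ via $H^\eta_{\kappa,R}(x(1))=0$ and propagating the resulting bound on $r(1)$ backward in time by the conformal factor is essentially bookkeeping, and the required size of $\kappa_0,R_0$ drops straight out of the inequalities $\kappa/(M\Lambda)>2$ and $\kappa\Lambda/m<R\kappa$.
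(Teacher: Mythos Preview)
Your argument is correct. Note, however, that the paper does not actually prove this lemma: its entire proof reads ``A detailed proof is given in \cite[Proposition~4.3]{AF1}.'' You have supplied a complete, self-contained argument where the paper gives only a citation.

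The ingredients you use --- the action identity
\[
\Acal_{\kappa,R}(x,\eta)\;=\;\eta\;+\;\frac{\eta}{\kappa}\int_0^1 r(t)^2\,\partial_r\sigma^{\eta t}(x(t))\,dt
\]
obtained from $\lambda(X_H)=r\,\partial_r H$, the sign control on the correction term via monotonicity of $\beta$ and $h_m^M$ on the two transition shells, the endpoint localization $r(1)\in[\kappa/M,\kappa/m]$ from $H^{\eta}_{\kappa,R}(x(1))=0$, and the backward trapping of $r(t)$ by the conformal factor of the contact isotopy --- are the standard steps in this kind of Rabinowitz--Floer estimate, so your proof is in all likelihood the argument of \cite{AF1} written out in the present notation. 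Two minor remarks: the monotonicity you impose on $\beta$ and $h_m^M$ is indeed harmless and could simply be added to the construction preceding the lemma; and the aside ``$\eta\neq0$ since $q\neq q'$'' is not strictly needed for the statement (when $\eta=0$ the conclusions hold trivially from the endpoint condition alone), though it is correct in the paper's setting where $q'\in Q_{\rm gen}$.
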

\begin{proof} A detailed proof is given in~\cite[Proposition 4.3]{AF1}. \end{proof}

We define the $L^2$-metric $g^\kappa$ on $\Omega_{q,q'}^1\times\RR$ by choosing an almost complex structure $J$ compatible with $\omega$ and setting
\begin{eqnarray*}
	g^\kappa((\hat x,\hat\eta),(\hat x',\hat \eta'))&=&\frac1\kappa\int_0^1\omega(\hat x,J\hat x')\,dt+\frac{\hat\eta\hat\eta'}\kappa.
\end{eqnarray*} 
With this scalar product the gradient of $\Acal_{\kappa,R}(\varphi^t;q,q')$ has the form
\begin{eqnarray*}\label{gradient}
	\nabla \Acal_{\kappa,R}(\varphi^t;q,q')(x,\eta) &=& \left(
	\begin{array}{c}
		\dot x-\eta X_{H^{\eta t}}(x(t))\\
		\int_0^1 H^{\eta t}(x(t))+\eta t\dot H^{\eta t}(x(t))\;dt
	\end{array}
		 \right).
\end{eqnarray*}

Assume that the functional $\Acal_{\kappa,R}(\varphi^t;q,q')$ is Morse. The chain complex $\RFC^b=\RFC^b(\varphi^t;q,q')$ of the filtered Rabinowitz--Floer homology is generated by the critical points of $\Acal_{\kappa,R}(\varphi^t;q,q')$ with action $\leq b\in\RR$. The boundary operator $\partial^b$ is defined by counting solutions of the negative gradient flow. For $a\leq b$ denote by $\iota^{a,b}:\RFC^a\to\RFC^{b}$ the inclusion. Choose $\kappa_0, R_0$ so large that Lemma~\ref{independence} holds for critical points with action in~$[a,b]$. Denote $\RFC_a^b=\RFC^b/\iota^{a,b}(\RFC^a)$ and set $\RFH_a^b=H(\RFC_a^b,\partial_a^b)$, where $\partial_a^b:\RFC_a^b\to\RFC_a^b$ is the induced boundary operator. These groups are independent of $\kappa\geq\kappa_0,R\geq R_0$. For such $\kappa,R$ and for $a'\leq a\leq b\leq b'$ let $\iota_{a,a'}^{b,b'}:\RFC_a^b\to\RFC_{a'}^{b'}$ be the homomorphism induced by inclusions. Denote by $\RFC_{-\infty}^b=\lim_{a\to-\infty}\RFC_a^b$ the inverse limit and for $a\in\RR\cup\{-\infty\}$ by $\RFC_a^\infty=\lim_{b\to\infty}\RFC_a^b$ the direct limit, while adjusting $\kappa,R$. For $\kappa\geq\kappa_0,R\geq R_0$, $\RFC_a^b=\RFC^b_{-\infty}/\iota_{-\infty,-\infty}^{a,b}(\RFC_{-\infty}^a)$. For better readability we omit the subscript $-\infty$ and denote by $\RFC_+^T=\RFC_0^T$ the positive part of the chain complex, by $\iota_+^{T,T'}=\iota_{0,0}^{T,T'}$ the inclusion and by $\RFH_+^T=\RFH_0^T$ the positive part of the homology.\\  

Consider now a family $\varphi_s^{t}$ of paths of contactomorphisms induced by a family of Hamiltonians $h_s^t$ such that $\partial_sh_s^t=0$ for $s\notin[0,1]$. Suppose that for the associated family of functionals $\Acal_s(\varphi_s^{t}):=\Acal_{\kappa,R}(\varphi^t_s;q,q')$ the constants from Lemma~\ref{independence} are chosen uniformly large enough. We set $\Acal_-=\Acal_s$ for $s\leq0$ and $\Acal_+=\Acal_s$ for $s\geq1$. The continuation homomorphism $\Phi:\RFC^\infty(\Acal_-)\to\RFC^\infty(\Acal_+)$ is defined in the standard way by counting solutions $(x(s),\eta(s))$ of the equation
\begin{eqnarray}\label{conteqn}
	\partial_s(x(s),\eta(s))&=&-\nabla\Acal_s(x(s),\eta(s)),
\end{eqnarray}
such that $\lim_{s\to\pm\infty}(x_s,\eta_s)=(x_\pm,\eta_\pm)$ exist and are critical points of $\Acal_\pm$. Then $\Phi$ induces an isomorphism $\RFH^\infty(\Acal_-)\to \RFH^\infty(\Acal_+)$, because $\eta$ is bounded along deformations, cf.~\cite[Corollary 3.4]{CieFra09}.

\begin{proof}[Proof of Theorem~\ref{continuation}] 

First we consider two positive Hamiltonians $h^t_0,h^t_1$ such that $h^t_0\leq h^t_1$. We show that the action is non-increasing along solutions of~(\ref{conteqn}).\\

For the deformation from $h_0^t$ to $h_1^t$ define a function $\chi:\RR\to[0,1]$ such that $\chi'\geq 0$ and
\begin{eqnarray*}
	\chi(s)&=&\begin{cases}0&\quad\mbox{if}\quad s\leq0,\\
	1&\quad\mbox{if}\quad s\geq 1,\end{cases}
\end{eqnarray*}
and set $h_s^t:=h^t_0 + \chi(s)(h_1^t-h_0^t)$. Denote by $H_s^t$, $\varphi_s^t$ and $\Acal_s$ the associated Hamiltonians, paths of contactomorphisms and functionals. The deformation satisfies 
$$\begin{array}{ccccccc}
\frac{d}{ds}H_s^t &=& \chi'(s)(H_1^t-H_0^t) &=& \chi'(s)\,r\,\beta(r)(h^t_1-h_0^t) &\geq& 0.
\end{array}$$

 For $(x,\eta)\in\Omega_{q,q'}^1\times\RR$,
\begin{eqnarray*}
	\frac{\partial}{\partial s} \Acal_s(x,\eta)&=&\int_0^1-\frac\eta\kappa\chi'(s)(H_1^{\eta t}-H_0^{\eta t}) \;dt.\nonumber
\end{eqnarray*}

 Now consider a solution $(u(s),\eta(s))$ of~(\ref{conteqn}). Denote $E=\int_{-\infty}^\infty \|\partial_s (x(s),\eta(s))\|^2\;ds$ and $\Acal_\pm=\Acal_\pm(u_\pm,\eta_\pm)$. 
We calculate
\begin{eqnarray}
	\Acal_+&=&\Acal_-+\int_{-\infty}^\infty\frac d{ds}\Acal_s(x(s),\eta(s))\;ds\nonumber\\
	&=&\Acal_-+\int_{-\infty}^\infty\Big(\frac\partial{\partial s}\Acal_s\Big)(x(s),\eta(s))+\big\langle\nabla\Acal_s\big(x(s),\eta(s)\big),\partial_s\big(x(s),\eta(s)\big)\big\rangle \;ds\nonumber\\
	&=&\Acal_--E+\int_{-\infty}^\infty\int_0^1-\frac{\eta(s)}\kappa\chi'(s)(H_1^{\eta t}-H_0^{\eta t}) \;dt\;ds\nonumber
\end{eqnarray}
 If $\eta(s)\geq0$, then $-\frac{\eta(s)}\kappa\chi'(s)(H_1^{\eta t}-H_0^{\eta t})\leq0$. If $\eta(s)\leq0$, then $h_0^{\eta t}=h_1^{\eta t}=c_0$ and thus $-\frac{\eta(s)}\kappa\chi'(s)(H_1^{\eta t}-H_0^{\eta t})=0$. It follows that $\Acal_+\leq\Acal_-$. \\

We have just shown that $\Phi$ restricts to $\Phi|_{\RFC^T(\varphi_0^t)}:
\RFC^T(\varphi_0^t)\to \RFC^T(\varphi_1^t)$. Furthermore $\varphi_0^t=\varphi_1^t$ for $t\leq 0$. Thus $\Acal(\varphi_0^t)$ and $\Acal(\varphi_1^t)$ have the same critical points with nonpositive action, and constant critical points $(x,\eta)$ with $\eta\leq0$ are solutions of~(\ref{conteqn}). Together with the fact that the action is non-increasing along solutions of~(\ref{conteqn}) we get that 
$$\Phi|_{\RFC^0(\varphi_0^t)}\;:\;\RFC^0(\varphi_0^t)\;\to\; \RFC^0(\varphi_1^t)$$
 is a lower diagonal isomorphism. Thus for the homomorphism $\Phi_*$ induced in the quotient we have
\begin{eqnarray*}
	\Phi_*(\RFC_+^T(\varphi_0^t))&=&\Phi(\RFC^T(\varphi_0^t))/\Phi(\RFC^0(\varphi_0^t))\\
	&=&\Phi(\RFC^T(\varphi_0^t))/\iota^{0,T}\RFC^0(\varphi_1^t)\\
	&\subseteq&\RFC_+^T(\varphi_1^t).
\end{eqnarray*}

Since $\Phi$ induces an isomorphism in $\RFH^\infty$, abbreviating $\iota=\iota_{+}^{T,\infty}$, we conclude that 
\begin{eqnarray}\label{inequality}
	\dim \iota_*\RFH_+^T(\varphi_0^t)&\leq&\dim \iota_*\RFH_+^T(\varphi_1^t).
\end{eqnarray}

Now choose $c,C>0$ such that $c\leq h^t\leq C$. Denote by $\varphi_c^t,\varphi_{h^t}^t$ and $\varphi_C^t$ the induced flows. The constants $c,C$ are not equal to $c_0$ for $t$ near $0$ or $1$, so we need to modify them to fit our setup. From the proof of~\cite[Proposition~6.2]{FLS} it becomes clear that there are functions $h_c^t,h_C^t:S^*Q\times[0,1]\to\RR$ with $h_c^t=h_C^t=c_0$ for $t$ near 0 and 1, that satisfy $h_c^t\leq h^t\leq h_C^t$, and such that the flows $\varphi_{h_c^t}^t$ and $\varphi_{h_C^t}^t$ induced by $h_c^t$ and $h_C^t$ are time-reparametrizations of the geodesic flows $\varphi_c^t$ and $\varphi_C^t$ that satisfy $\varphi^1_{h_c^t}=\varphi^1_c$ and $\varphi^1_{h_C^t}=\varphi^1_C$. Extend $h_c^t$ and $h_C^t$ as in Section~\ref{sec:tce}, see Figure~\ref{fig:graph2}. 

\begin{figure}[h]
	\centering
		\includegraphics{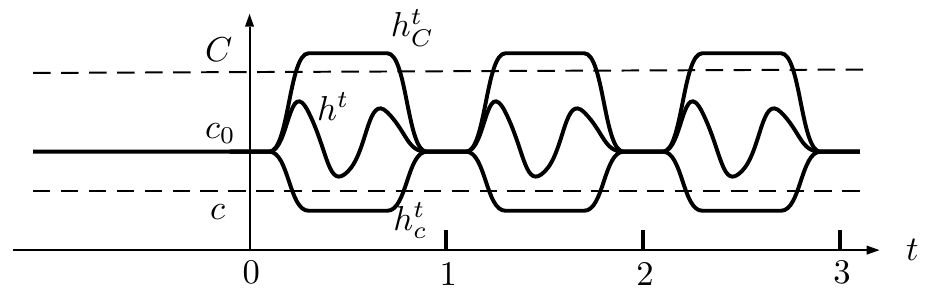}
	\caption{The functions $h_c^t$, $h^t$ and $h_C^t$, extended to $\RR$}.
	\label{fig:graph2}
\end{figure}

We apply~(\ref{inequality}) twice, first to a monotone deformation from $h_c^t$ to $h^t$ and then to a monotone deformation from $h^t$ to $h_C^t$. 
By construction of $h_c^t$ and $h_C^t$ it is clear that there exists a function $\tau:\RR_{\geq0}\to\RR_{\geq0}$ such that $\varphi_{h_c^t}^{\tau(t)}=\varphi_{h_C^t}^t$ and such that $t\leq\tau(t)\leq2\frac Cc t$. Thus,
\begin{eqnarray*}
	\iota_*\RFH_+^{\tau(T)}(\varphi_{h_c^t}^t)&\cong&\iota_*\RFH_+^T(\varphi_{h_C^t}^t).
\end{eqnarray*}
With~(\ref{inequality}), this results in 
$$\dim \iota_*\RFH_+^T(\varphi_{h_c^t}^t)\leq\dim \iota_*\RFH_+^T(\varphi^t)\leq\dim \iota_*\RFH_+^T(\varphi_{h_C^t}^t)=\dim \iota_*\RFH_+^{\tau(T)}(\varphi_{h_c^t}^t).$$
We conclude that for every positive path of contactomorphisms the Rabinowitz--Floer homology grows as fast as for a Reeb flow.
\end{proof}

\section{Complementary examples}\label{examples}

In this section we provide two examples that confirm Example~\ref{prop:examples}.\\
\\
The first example is described in detail in~\cite[Section~7]{MacSch11}. Consider the semidirect product $G=\RR^2\ltimes\RR$ with multiplication 
$$(x,y,z)\bullet(x',y',z')=(x+e^zx',y+e^{-z}y',z+z').$$
Choose a cocompact lattice $\Gamma\subseteq G$ of exponential growth. Then the fundamental group $\Gamma$ of the closed manifold $Q=\Gamma\backslash G$ has exponential growth. The functions 
$$(M_x,M_y,M_z)=(e^zp_x,e^{-z}p_y,p_z)$$
on $T^*G$ are left-invariant and thus descend to $T^*Q$. Consider the Hamiltonian
$$H=\frac12\left((M_x+1)^2+M_y^2+M_z^2\right)$$
that describes an exact magnetic flow on $Q$. For every energy value $k>0$ the hypersurface $\Sigma_k=H^{-1}(k)$ is contactomorphic to the spherization $S^*Q$. The zero section is enclosed by $\Sigma_k$ if and only if $k>\frac 12$, and the zero section is contained in $\Sigma_{\frac 12}$. For every $k>\frac 12$ the flow $\varphi_k^t$ induced by $H$ on $\Sigma_k$ is a positive contact isotopy and thus has positive topological entropy. On the other hand Macarini--Schlenk showed that for $k\leq\frac12$ the flow $\varphi_k^t$ has zero topological entropy. In particular, $\varphi_{\frac12}^t$ is the smooth limit of positive contact isotopies, and thus a non-negative contact isotopy. It fails to be positive only on the zero section, which is a codimension $2$ subset of $\Sigma_{\frac12}$.

The second example was pointed out to me by Marcelo Alves. 
Let $\Sigma_k$ be the closed orientable surface of genus $k\geq2$. Choose a Riemannian metric $g$ on $\Sigma_k$ such that a closed disc $D\subseteq\Sigma_k$ is isometric to a round sphere $S^2$ deprived of an open disc that is strictly contained in a hemisphere. Equip the spherization $S^*\Sigma_k$ with the contact form whose Reeb flow is the geodesic flow $\varphi_g^t$. Let $U\subseteq S^*\Sigma_k$ be the set of points whose $\varphi_g^t$ flow lines intersect fibers over $\Sigma_k\backslash D$. By construction, $U^c$ is closed with non-empty interior. Further, the geodesic flow $\varphi_g$ on $U^c$ is periodic (the flow lines project to great circles on $S^2$). Thus $U^c$ is a closed invariant set on which $\varphi_g^t$ has zero topological entropy, and $\overline U$ is also a closed invariant set. From the maximum formula for topological entropy on decompositions into closed invariant sets~\cite[Proposition 3.1.7(2)]{HaKa} we conclude that $\varphi_g$ has positive topological entropy on $\overline U$. Now consider the contact Hamiltonian flow $\varphi^t$ induced by a Hamiltonian that is constant $1$ on $\overline U$ and $0$ outside a small neighbourhood $\widetilde U$ of $\overline U$. Since $\varphi^t$ coincides with $\varphi_g^t$ on $\overline U$, it has positive topological entropy, but restricts to the identity on $\widetilde U^c$, which is a set with nonempty interior. Note that for all $\varepsilon>0$ we can choose $g$ such that $\mu_g(\widetilde U^c)\geq (1-\varepsilon)\mu_g(S^*(\Sigma_k))$, where $\mu_g$ is the measure induced by $g$.


\end{document}